\title[ON QUASICONFORMAL EQUIVALENCE BETWEEN ...]{ON QUASICONFORMAL EQUIVALENCE BETWEEN CERTAIN INFINITELY OFTEN PUNCTURED PLANES}
\author[H. FUJINO]{ {\tiny By} \vspace{1ex} \\  HIROKI\ \ \  FUJINO \vspace{-0ex}}
\address{Graduate~School~of~Mathematics, Nagoya~University, Furo-cho Chikusa-ku Nagoya 464-8602, Japan}
\email{m12040w@math.nagoya-u.ac.jp}
\subjclass[2010]{Primary~51M04, Secondary~51M05.}
\keywords{quasiconformal mappings, uniform domain, extremal distance.}
\date{\today}
\renewcommand{\subsection}{%
  \@startsection{subsection}%
   {1}%
   {4ex}%
   {4ex \@plus -1ex \@minus -.2ex}%
   {-2ex \@plus.2ex}
   {\normalfont\normalsize\bfseries}%
}%
\newtheorem{thm}{Theorem}[section]
\newtheorem{lemma}[thm]{Lemma}
\newtheorem{coro}[thm]{Corollary}
\newtheorem{theo}{定理}[section]  
\newtheorem{thma}[theo]{Theorem}  
\newtheorem{dfnemp}{Definition}
\newtheorem{probemp}{Problem}    
\newtheorem{coroemp}{Corollary}   
\newtheorem{thea}{定理}[section]  
\newtheorem{conjecturer}[thea]{Conjecture}  
\newcommand{\tei}{Teichm\"uller\ } 
\newcommand{\edc}[2]{\delta^{\hat{\mathbb{C}}}(#1,#2)}
\newcommand{\edd}[2]{\delta^{D}(#1,#2)}
\newcommand{\zokud}[2]{\mathscr{F}^{D}(#1,#2)}
\newcommand{\mode}{{\rm mod}}
\begin{document}

\begin{abstract}

A closed discrete subset $A\subset \mathbb{C}$ is called ``tame'' if $\mathbb{C}\setminus A$
is quasiconformally equivalent to $\mathbb{C}\setminus \mathbb{Z}$. By giving 
several criteria for $A$ to be tame, we shall show that $\mathbb{Z}+i\mathbb{Z}$
is not tame.

\end{abstract}

\maketitle　
\vspace{-3ex}

\section{Introduction}
\label{chap1}
\setcounter{page}{1}
\renewcommand{\thepage}{\arabic{page}}

Let $R$ be a Riemann surface.
The \tei space $T(R)$ is a space which describes all quasiconformal deformations of $R$.
It is well known that $T(R)$ becomes either a finite dimensional complex manifold or a
non-separable infinite dimensional Banach analytic manifold.
$T(R)$ becomes finite dimensional if and only if $R$ is of finite type.
Through the investigation of quasiconformal deformations of a certain infinite type Riemann surface, 
a certain characteristic subspace will be found, which is separable.\vspace{1ex}

The universal \tei space $T(\mathbb{D})$ simultaneously describes all quasiconformal deformations
of all hyperbolic type Riemann surfaces. This arises from the fact that 
each covering $X\rightarrow Y$ induces an embedding of $T(Y)$ into $T(X)$.
On the other hand, $\mathbb{C}\setminus\mathbb{Z}$ covers a certain $n$-punctured 
Riemann sphere for each $n\geq 3$.
Namely $T(\mathbb{C}\setminus\mathbb{Z})$ simultaneously describes all quasiconformal 
deformations of Riemann surfaces of genus $0$ with at least three punctures.
Needless to say, the universal \tei space $T(\mathbb{D})$ also describes them.
However for the reasons mentioned below, 
the \tei space $T(\mathbb{C}\setminus\mathbb{Z})$ is more suitable to describe them than $T(\mathbb{D})$.

For each positive integer $n$, let $R_n=(\mathbb{C}\setminus \mathbb{Z})/\langle z+n\rangle$. 
$R_n$ is an $(n+2)$-punctured Riemann
sphere, and the projection $p^n:\mathbb{C}\setminus\mathbb{Z} \rightarrow R_n$ induces the
embedding $p^n_{\ast}:T(R_n)\hookrightarrow T(\mathbb{C}\setminus \mathbb{Z})$.
The covering transformation group of $p^n$ is the cyclic group $\langle z+n\rangle$, so that,
quasiconformal deformations of $R_n$ correspond to periodic quasiconformal
deformations of $\mathbb{C}\setminus\mathbb{Z}$ with only a period $z+n$.
Then it is shown from McMullen's theorem in \cite{mcm1} that $p^n_{\ast}$
is totally geodesic for \tei metric. By contrast, the embedding of $T(R_n)$ into $T(\mathbb{D})$ is not 
totally geodesic (cf. The Kra--McMullen theorem \cite{mcm1}).
Additionally, 
$\mathbb{C}\setminus \mathbb{Z}$ is considered to be one of the smallest Riemann surface which has the above properties, that is,
there exists no Riemann surface except $\mathbb{C}\setminus\mathbb{Z}$ 
which is covered by $\mathbb{C}\setminus\mathbb{Z}$ and covers $R_n$ for all $n$.

Thus, in this paper, we would like to investigate quasiconformal deformations of
$\mathbb{C}\setminus\mathbb{Z}$. First, we shall try to find all 
Riemann surfaces which are quasiconformally equivalent to $\mathbb{C}\setminus\mathbb{Z}$.\\

If $R$ is quasiconformally equivalent to $\mathbb{C}\setminus\mathbb{Z}$, 
then $R$ is conformally equivalent to $\mathbb{C}\setminus A$ by a certain closed discrete subset $A\subset \mathbb{C}$ (cf. The removable singularity theorem, see \cite[Theorem\ 17.3.]{vai2}\ and \cite[p.14]{geh2}).
\begin{dfnemp}
A closed discrete subset $A\subset \mathbb{C}$ is called tame if $\mathbb{C}\setminus A$
is quasiconformally equivalent to $\mathbb{C}\setminus \mathbb{Z}$.
\end{dfnemp}
We consider the following problem.
\begin{probemp}
Let $\mathscr{P}$ be the family of all closed discrete infinite subsets $A\subset \mathbb{C}$.
Find all tame $A \in \mathscr{P}$.
\end{probemp}
For this Problem, we obtain the following results as partial solutions.\\

First, for $A \in \mathscr{P},\ z\in \mathbb{C}$, and $r>0$, we define
quantities $d$ and $\tilde{d}$ by
\begin{eqnarray*}
d(z,r;A)\hspace{-2mm}&=&\hspace{-3mm}\sup_{w\in \bar{D}_r(z)} {\rm dist}(w,A),\hspace{3mm} \bar{D}_r(z)=\left\{w\in \mathbb{C} \mid |w-z|\leq r \right\},\\
\tilde{d}(z,r;A)\hspace{-2mm}&=&\hspace{-3mm}\sup_{w\in Q_r(z)} {\rm dist}(w,A),\hspace{3mm} Q_r(z)=\left\{z+x+iy \mid -r\leq x,y \leq r \right\}.
\end{eqnarray*}
\begin{thma} \label{thmA}
Let $A\in \mathscr{P}$. If there exists a closed discrete subset $B\subset \mathbb{R}$
such that $\mathbb{C}\setminus B$ is quasiconformally equivalent
to $\mathbb{C}\setminus A$, then 
\[
\sup_{z\in \mathbb{C},\ r>0}\  \frac{r}{d(z,r;A)}\ ,\ \ \sup_{z\in \mathbb{C},\ r>0}\  \frac{r}{\tilde{d}(z,r;A)}<+\infty.
\]
\end{thma}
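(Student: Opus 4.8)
The plan is to reduce both inequalities to a single uniform lower bound of the form $d(z,r;A)\ge \kappa\, r$ with $\kappa=\kappa(K)>0$, and to obtain that bound by transporting, through the given quasiconformal map, the elementary fact that a subset of the real line always leaves a large empty disk inside every disk of the plane. First observe that $Q_r(z)\supset \bar D_r(z)$, so $\tilde d(z,r;A)\ge d(z,r;A)$ and hence $r/\tilde d\le r/d$; it therefore suffices to bound $\sup_{z,\,r}\, r/d(z,r;A)$. Let $f\colon\mathbb{C}\setminus B\to\mathbb{C}\setminus A$ be $K$-quasiconformal (quasiconformal equivalence is symmetric, so I may take the domain to be the $B$-side). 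By the removable singularity theorem quoted above, $f$ extends across each point of the closed discrete set $B$, yielding a $K$-quasiconformal self-homeomorphism $F$ of $\mathbb{C}$ with $F(B)=A$ and $F(\infty)=\infty$; consequently $F$ is $\eta$-quasisymmetric for some $\eta=\eta_K$ depending only on $K$. Write $L_F(x,s)=\max_{|y-x|=s}|F(y)-F(x)|$ and $\ell_F(x,s)=\min_{|y-x|=s}|F(y)-F(x)|$.

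Next I would localize. Fix a target disk $\bar D_r(z')$ and set $x=F^{-1}(z')$. Since $s\mapsto L_F(x,s)$ is continuous and surjective onto $(0,\infty)$, choose $s>0$ with $L_F(x,s)=r$. Because $B\subset\mathbb{R}$, pick the point $u\in\partial D_s(x)$ with ${\rm dist}(u,\mathbb{R})$ maximal; elementary geometry gives ${\rm dist}(u,B)\ge{\rm dist}(u,\mathbb{R})\ge s$, so $D_s(u)\cap B=\emptyset$. Put $w=F(u)$. As $|u-x|=s$, we have $|w-z'|\le L_F(x,s)=r$, so $w\in\bar D_r(z')$; and since $D_s(u)\subset\mathbb{C}\setminus B$, its image $F(D_s(u))\subset\mathbb{C}\setminus A$ contains the round disk $D_{\ell_F(u,s)}(w)$ (because $F(\partial D_s(u))$ is a Jordan curve staying at distance $\ge\ell_F(u,s)$ from $w$). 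Hence ${\rm dist}(w,A)\ge\ell_F(u,s)$.

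It then remains to bound $\ell_F(u,s)$ from below by a fixed multiple of $r$, and this is exactly what quasisymmetry delivers once the two basepoints $x$ and $u$ are placed at the common distance $s$. Applying the $\eta$-quasisymmetry inequality at the basepoint $x$ (comparing two points of $\partial D_s(x)$) gives $\ell_F(x,s)\ge \eta(1)^{-1}L_F(x,s)$, while applying it at the basepoint $u$ (comparing $x$ and an arbitrary $y\in\partial D_s(u)$, both at distance $s$ from $u$) gives $\ell_F(u,s)\ge \eta(1)^{-1}|F(u)-F(x)|$. Chaining these with $|F(u)-F(x)|\ge \ell_F(x,s)$ yields
\[
{\rm dist}(w,A)\ \ge\ \ell_F(u,s)\ \ge\ \frac{1}{\eta(1)}\,|F(u)-F(x)|\ \ge\ \frac{1}{\eta(1)}\,\ell_F(x,s)\ \ge\ \frac{1}{\eta(1)^2}\,L_F(x,s)\ =\ \frac{r}{\eta(1)^2}.
\]
Thus $d(z',r;A)\ge \eta(1)^{-2}r$ for all $z'$ and $r$, so both suprema are at most $\eta(1)^2<+\infty$.

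The genuinely routine parts here are the extension-and-quasisymmetry step and the remark that $B\subset\mathbb{R}$ always leaves an empty disk of radius $s$ inside $D_s(u)$. The one place demanding care, and the expected main obstacle, is the scale matching: I must choose $s$ so that the distorted image of the circle $\partial D_s(x)$ lands inside $\bar D_r(z')$ while still comparing the quasisymmetric distortion at the two \emph{distinct} basepoints $x$ and $u$ at a \emph{single} common scale $s$. Taking $L_F(x,s)=r$ and placing $u$ on $\partial D_s(x)$ is precisely what forces every quasisymmetry comparison to occur at ratio $1$, which is what keeps the final constant $\eta(1)^2$ independent of $z'$ and $r$. (In the spirit of the paper's use of extremal distance, one could instead replace these quasisymmetry estimates by the quasi-invariance of the modulus of the ring $F\big(\bar D_r(z')\setminus D_s(u)\big)$; but the quasisymmetric route is the most direct.)
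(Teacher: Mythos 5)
Your proof is correct, and it takes a genuinely different route from the paper's. The paper, after the same extension step, considers the quasidisk $D=f(\mathbb{H})$ and invokes the theorem (Gol'dstein--Vodop'janov, Gehring--Osgood) that a quasidisk is a $c$-uniform domain; it then runs a curve-chasing argument --- pick $z_2$ with $|z_1-z_2|=2r/c$, join $z_1,z_2$ by a uniform curve, and show the curve's midpoint $z_0$ lies in $\bar{D}_r(z_1)$ with ${\rm dist}(z_0,\partial D)\geq r/c^2$ --- using that $A=f(B)\subset \partial D$, and it finally needs a three-case analysis over the position of the center ($z_1\in D$; $z_1\in\partial D\setminus\{\infty\}$, handled by V\"ais\"al\"a's theorem; $z_1\in\mathbb{C}\setminus\bar{D}$, handled by repeating the argument with the lower half-plane). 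You instead rest on the distortion theory of global quasiconformal maps: the extension $F$ is $\eta_K$-quasisymmetric (in fact only the ratio-$1$ case, i.e.\ the circular distortion theorem, is used), and the elementary fact that every circle $\partial D_s(x)$ contains a point $u$ with ${\rm dist}(u,\mathbb{R})\geq s$ gives an empty disk $D_s(u)\subset\mathbb{C}\setminus B$, which $F$ carries onto a region containing a round disk of radius at least $r/\eta(1)^2$ centered at a point of $\bar{D}_r(z')$; your scale choice $L_F(x,s)=r$ (continuity plus properness of $F$), the Jordan-curve inclusion $D_{\ell_F(u,s)}(F(u))\subset F(D_s(u))$, and both ratio-$1$ quasisymmetry estimates are all sound, as is your simpler one-sided reduction of $\tilde{d}$ to $d$. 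What each approach buys: the paper's argument stays within the quasidisk/uniform-domain circle of ideas it wants to highlight, at the price of an auxiliary boundary theorem and a case distinction; yours replaces that machinery by the (equally standard but different) theorem that $K$-quasiconformal self-homeomorphisms of the plane are quasisymmetric with control depending only on $K$, yielding a single uniform argument with no case analysis and the explicit bound $\eta(1)^2$. The one point to make citation-sharp is precisely that global quasisymmetry/circular-distortion theorem, since it is the only nontrivial external input of your proof.
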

\vspace{1ex}
\begin{coroemp}
$\mathbb{Z}+i\mathbb{Z}$ is not tame.
\end{coroemp}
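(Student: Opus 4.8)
The plan is to argue by contradiction, using Theorem \ref{thmA} as a black box and reducing the whole matter to an elementary geometric estimate. Suppose, for contradiction, that $\mathbb{Z}+i\mathbb{Z}$ were tame. Then by definition $\mathbb{C}\setminus(\mathbb{Z}+i\mathbb{Z})$ is quasiconformally equivalent to $\mathbb{C}\setminus\mathbb{Z}$. The crucial, and almost tautological, observation is that $\mathbb{Z}$ is itself a closed discrete subset of $\mathbb{R}$. So I would set $A=\mathbb{Z}+i\mathbb{Z}$ and $B=\mathbb{Z}\subset\mathbb{R}$; since quasiconformal equivalence is symmetric, $\mathbb{C}\setminus B$ is quasiconformally equivalent to $\mathbb{C}\setminus A$, and hence the hypothesis of Theorem \ref{thmA} is satisfied. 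The theorem then forces
\[
\sup_{z\in\mathbb{C},\ r>0}\ \frac{r}{d(z,r;\mathbb{Z}+i\mathbb{Z})}<+\infty .
\]

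The second step is to show that this supremum is in fact infinite, which gives the contradiction. The key point is that the integer lattice has finite covering radius: every point of $\mathbb{C}$ lies within distance $\tfrac{1}{\sqrt2}$ of some lattice point, the extremal points being the centres of the unit squares $\tfrac12+\tfrac{i}{2}+\mathbb{Z}+i\mathbb{Z}$. Consequently $\mathrm{dist}(w,\mathbb{Z}+i\mathbb{Z})\le\tfrac{1}{\sqrt2}$ for every $w\in\mathbb{C}$, and taking the supremum over $w\in\bar{D}_r(z)$ gives the uniform bound $d(z,r;\mathbb{Z}+i\mathbb{Z})\le\tfrac{1}{\sqrt2}$, independent of $z$ and $r$. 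Hence $r/d(z,r;\mathbb{Z}+i\mathbb{Z})\ge\sqrt2\,r\to+\infty$ as $r\to+\infty$, which flatly contradicts the finiteness obtained in the previous paragraph. Therefore the assumption of tameness is untenable, and $\mathbb{Z}+i\mathbb{Z}$ is not tame.

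I expect essentially no obstacle in this argument: the entire difficulty of the result resides in Theorem \ref{thmA}, whereas the corollary merely requires recognising that the containment $\mathbb{Z}\subset\mathbb{R}$ makes the hypothesis of the theorem automatic, together with the fact that the lattice is so uniformly dense that its distance function is globally bounded. Note moreover that the same reasoning applies verbatim with $\tilde{d}$ in place of $d$, since $\mathrm{dist}(\cdot,\mathbb{Z}+i\mathbb{Z})$ is bounded by $\tfrac{1}{\sqrt2}$ regardless of whether one maximises over the disk $\bar{D}_r(z)$ or the square $Q_r(z)$; thus either of the two suprema in Theorem \ref{thmA} suffices to reach the contradiction, and one is free to use whichever is more convenient.
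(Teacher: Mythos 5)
Your proof is correct and takes essentially the same route as the paper: the paper also applies Theorem \ref{thmA} with $B=\mathbb{Z}$ and uses the fact that no point of the plane is farther than $\sqrt{2}/2$ from the lattice (stated there as $\tilde{d}(0,r;\mathbb{Z}+i\mathbb{Z})=\sqrt{2}/2$ for $r>1$), so that $r/\tilde{d}\to+\infty$ as $r\to+\infty$. The only cosmetic differences are that you work with $d$ instead of $\tilde{d}$ and phrase the argument as a contradiction rather than as the contrapositive, while the paper proves the slightly more general statement for the sets $A_s$.
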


The complex plane $\mathbb{C}$ and the unit disk $\mathbb{D}$ are well known as a typical example of Riemann surfaces which are homeomorphic
and are not quasiconformally equivalent.
This Corollary also gives an example of such Riemann surfaces, but is essentially different from the case of $\mathbb{C}$ and $\mathbb{D}$.

In addition, various corollaries are provided by Theorem\ \ref{thmA}. We shall
introduce them in Section\ \ref{coroA}. To prove Theorem\ \ref{thmA}, the fact
that a quasidisk becomes a uniform domain plays an important role. This fact was 
proved by V. Gol$'$d\v ste\v \i n and S. Vodop$'$janov\ \cite{gol1} first. 
Later, F. W. Gehring and B. Osgood proved the more generalized form in \cite{geh6} and
\cite{osg1}.\\

Next, we obtain the following theorem by comparing the extremal distances 
of certain continua. Let $D(a,r)=\left\{ z\in \mathbb{C} \mid |z-a|<r\right\}$ for
$a \in \mathbb{C}$ and $r>0$.
\begin{thma} \label{thmB}
Let $A\in \mathscr{P}$. Assume there exists an automorphism of infinite order
$h\in {\rm Aut}(\mathbb{C}\setminus A)$ such that 
the quotient space $\left( \mathbb{C}\setminus A\right)/\langle h\rangle$ has
infinitely many punctures. If 
$A$ is tame, then for 
any $\varepsilon>0$ and $d \in \mathbb{N}$, there exists $a\in A$ such that
\[
\# D(a,\varepsilon)\cap A \geq d,
\]
where for a finite set $X$, $\# X$ denotes the number of elements of $X$.
\end{thma}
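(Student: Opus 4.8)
The plan is to prove the contrapositive, after first reducing $h$ to a translation. Any conformal automorphism of $\mathbb{C}\setminus A$ extends, by the removable singularity theorem, to a Möbius transformation $\hat h$ of $\hat{\mathbb{C}}$ carrying the ideal boundary $A\cup\{\infty\}$ onto itself. Since $A$ is closed, discrete and infinite, it accumulates only at $\infty$, so $\infty$ is the unique non-isolated point of $A\cup\{\infty\}$; as $\hat h$ preserves this set it must fix $\infty$, hence is affine. An affine map of infinite order preserving a discrete set cannot be loxodromic or an irrational rotation (in either case some orbit would accumulate in $\mathbb{C}$), so $h$ is a translation. Conjugating by a conformal automorphism of $\mathbb{C}$, and noting that the assertion to be proved is invariant under the rescalings $A\mapsto\lambda A$, I may assume $h(z)=z+1$. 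Then $A=A+1$, so $A$ is a disjoint union of horizontal rows $\mathbb{Z}+\xi_j+iH_j$, exactly one row for each puncture of $(\mathbb{C}\setminus A)/\langle h\rangle$; by hypothesis there are infinitely many rows, and discreteness of $A$ forces $|H_j|\to\infty$.

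Next I set up the contrapositive. Assume $A$ is tame but that the conclusion fails: there are $\varepsilon_0>0$ and $N\in\mathbb{N}$ with $\#\big(D(a,\varepsilon_0)\cap A\big)<N$ for every $a\in A$ (bounded multiplicity). Tameness gives a $K$-quasiconformal $f\colon\mathbb{C}\setminus\mathbb{Z}\to\mathbb{C}\setminus A$, which extends to a $K$-quasiconformal $\hat f\colon\hat{\mathbb{C}}\to\hat{\mathbb{C}}$ with $\hat f(\mathbb{Z})=A$ and, by the same isolated-point argument, $\hat f(\infty)=\infty$; in particular $\hat f$ is quasisymmetric and matches the single accumulation end of $\mathbb{C}\setminus\mathbb{Z}$ at $\infty$ with that of $\mathbb{C}\setminus A$. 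My goal is to derive a contradiction with the standing hypothesis that there are infinitely many rows.

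The heart of the argument is a comparison of extremal distances $\edc{\,\cdot\,}{\,\cdot\,}$. Using discreteness of $A$ I choose, far out along the escaping rows, disjoint continua $E,F$ (for instance short horizontal arcs carrying a controlled number of consecutive punctures of two distinct high rows) and estimate $\edc{E}{F}$ inside $\hat{\mathbb{C}}\setminus A$. Quasi-invariance of extremal distance gives $\tfrac1K\,\edc{\hat f^{-1}E}{\hat f^{-1}F}\le\edc{E}{F}\le K\,\edc{\hat f^{-1}E}{\hat f^{-1}F}$, transporting every such estimate to continua clustered about the single line $\mathbb{R}\supset\mathbb{Z}$. On the model side the translation invariance of $\mathbb{Z}$ furnishes uniform, scale-independent control on all these extremal distances, whereas on the $A$ side the rows, being $1$-periodic and escaping transversally to the one end at $\infty$, accumulate extremal ``width'' as their number grows. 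The contradiction I aim for is that infinitely many such rows would require more extremal room near that single end than the one-dimensional configuration along $\mathbb{R}$ can supply after bounded $K$-distortion; hence only finitely many rows can occur, against the hypothesis.

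The main obstacle is precisely this last quantitative accounting: to convert ``one $1$-periodic row escaping to infinity'' into a definite additive contribution to the relevant extremal distances, and to show that these contributions are incompatible---under $K$-quasi-invariance and the bounded-multiplicity hypothesis, which prevents the rows from degenerating into a bounded region---with the finite budget imposed by the model $\mathbb{C}\setminus\mathbb{Z}$, whose punctures all lie on one line. Choosing the continua $E,F$ so that both the model-side bound and the $A$-side growth are simultaneously controlled, rather than the reduction to a translation or the invocation of quasi-invariance, is where the real work will lie.
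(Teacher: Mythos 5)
Your preliminary reductions are fine and agree with the paper: the extension of $h$ to an affine map, the elimination of non-translations (the paper's Lemma~\ref{lemma1}), the normalization $h(z)=z+1$, and the extension of $f$ to a quasiconformal self-homeomorphism of $\hat{\mathbb{C}}$ fixing $\infty$ with $f(\mathbb{Z})=A$. But the core of the proof is missing, and you say so yourself: the ``quantitative accounting'' you defer to future work \emph{is} the theorem. Moreover, the plan you sketch for it appears unworkable as stated. Your intended contradiction is that infinitely many escaping rows demand more ``extremal room'' than the configuration $\mathbb{Z}\subset\mathbb{R}$ can supply under $K$-distortion. Two things go wrong. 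First, the preimage continua $\hat f^{-1}(E)$, $\hat f^{-1}(F)$ are \emph{not} ``clustered about the line $\mathbb{R}$'': they meet $\mathbb{R}$ only in the integer preimages of the punctures and may otherwise wander anywhere in the plane, so translation invariance of $\mathbb{Z}$ gives no ``uniform, scale-independent control'' on their mutual extremal distances. Second, even the information you can transport imposes no finite budget on the model side: lower bounds of Vuorinen type for $\edc{E_i}{E_j}$ (arcs along distinct high rows) transport to lower bounds for $\edc{\hat f^{-1}E_i}{\hat f^{-1}E_j}$, and one can exhibit arbitrarily many disjoint continua in the plane, each containing a prescribed block of integers, whose pairwise joining-family moduli are as large as one likes (push each block's ``body'' to a slightly different height and connect it down to its integers); so no contradiction with ``infinitely many rows'' follows. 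Indeed no such contradiction \emph{can} follow from tameness and infinitely many rows alone --- that is exactly the paper's open Conjecture~I --- so any correct argument must make the bounded-multiplicity hypothesis do the decisive work, whereas in your sketch it appears only parenthetically.

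What you are missing is the combinatorial step, the paper's Lemma~\ref{lemma2}: consider the horizontal lines $C_n$ through the punctures and their preimages $C'_n=f^{-1}(C_n)$; every integer lies on exactly one curve $C'_{n_m}$, and if one had $|f^{-1}(a)-f^{-1}(a-1)|\leq M$ for all $a\in A$, then $M+1$ consecutive curves $C'_{n_k},\dots,C'_{n_l}$ would each have to pass through at least two points of the $2M$-point set $\{k-M,\dots,k-1\}\cup\{l+1,\dots,l+M\}$ --- a pigeonhole contradiction. Hence $\sup_{a\in A}|f^{-1}(a)-f^{-1}(a-1)|=\infty$: consecutive punctures of a single row, at distance exactly $1$ in the image, have preimages $n<m$ in $\mathbb{Z}$ arbitrarily far apart. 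Only at that point does an extremal-distance estimate of the kind you invoke enter (the paper's Lemma~\ref{lemma3}): Vuorinen's inequality, quasi-invariance, and the ring domain $\{d/2<|z-(n-d/2)|<d/2+m-n\}$ separating $N_d=[n-d,n]$ from $M_d=[m,m+d]$ yield
\[
\frac{\min\{{\rm diam}\,f(N_d),{\rm diam}\,f(M_d)\}}{|f(m)-f(n)|}\ \leq\ \exp\!\left(\frac{\pi^2K}{\log\left(1+2\frac{m-n}{d}\right)}\right)-1,
\]
and since $|f(m)-f(n)|=1$ while $m-n$ can be taken arbitrarily large, some interval of $d+1$ consecutive integers has image of diameter $<\varepsilon$; taking $a=f(n)$ (or $f(m+d)$) gives $\#\,D(a,\varepsilon)\cap A\geq d$. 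So your toolbox (Vuorinen's bound, quasi-invariance of moduli) is the right one, but it must be aimed at the tension between unit image-distance and unbounded preimage-distance \emph{within one row}, and that tension has to be created first by the counting argument, which your proposal does not contain.
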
 \vspace{0mm}
Recall that $A$ is closed and discrete in $\mathbb{C}$. 
Certainly, there exist some $A\in \mathscr{P}$ 
satisfying the above assumptions 
which cannot be decided whether $A$ is tame or not from Theorem B,
but they have to be a very strange form (see Example\ \ref{exstrange}).
Therefore it is expected that the next proposition holds.
\begin{conjecturer}
Let $A\in \mathscr{P}$. If there exists an automorphism of infinite order
$h\in {\rm Aut}(\mathbb{C}\setminus A)$ such that 
the quotient space $\left( \mathbb{C}\setminus A\right)/\langle h\rangle$ has
infinitely many punctures, then $A$ is not tame.
\end{conjecturer}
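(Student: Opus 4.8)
The plan is to first pin down the automorphism $h$ completely, and only then to run an extremal-distance comparison on the quotient. Since $A$ is closed and discrete in $\mathbb{C}$, every point of $A$ is an isolated singularity of the conformal map $h$, and because $h$ permutes the punctures it stays near a finite point (the image puncture) as $z$ approaches a point of $A$; hence each singularity is removable and $h$ extends to an injective entire map $\tilde h:\mathbb{C}\to\mathbb{C}$ with $\tilde h(A)=A$. An injective entire function is affine, so $\tilde h(z)=az+b$. If $|a|\neq 1$, then $h$ has a finite fixed point $z_0$ and the orbit of any $w\in A$ under $h$ or $h^{-1}$ converges to $z_0\in\mathbb{C}$, producing an accumulation point of $A$ in $\mathbb{C}$ and contradicting discreteness. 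If $|a|=1$ with $a\neq 1$, then $a$ is not a root of unity (a root of unity would make $h$ of finite order), so $h$ is an irrational rotation about $z_0$ whose orbits are dense on circles, again contradicting discreteness. Thus $a=1$, i.e. $h$ is a translation $z\mapsto z+b$ with $b\neq 0$. After a conformal, hence tameness-preserving, normalization I may take $b=1$, so $A=A+1$, and the hypothesis that $(\mathbb{C}\setminus A)/\langle h\rangle$ has infinitely many punctures becomes: $A_0:=A\cap\{0\le\mathrm{Re}\,z<1\}$ is infinite and accumulates only as $\mathrm{Im}\,z\to\pm\infty$.

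I would then argue the contrapositive: assuming $A$ does \emph{not} clump, i.e. there exist $\varepsilon_0>0$ and $d_0\in\mathbb{N}$ with $\#\big(D(a,\varepsilon_0)\cap A\big)<d_0$ for every $a\in A$, I will show $A$ is not tame. Pass to the quotient cylinder $C=\mathbb{C}/\langle z\mapsto z+1\rangle$, conformally $\mathbb{C}^{\ast}$ via $w=e^{2\pi i z}$, and let $S=(\mathbb{C}\setminus A)/\langle h\rangle$ be $C$ with its infinitely many projected punctures, which accumulate at the two ends $0,\infty$ of $\mathbb{C}^{\ast}$. A tameness map $f:\mathbb{C}\setminus\mathbb{Z}\to\mathbb{C}\setminus A$ descends to a quasiconformal homeomorphism $\bar f:(\mathbb{C}\setminus\mathbb{Z})/\langle g\rangle\to S$, where $g=f^{-1}\circ h\circ f$ is quasiconformal. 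Since extremal distances of curve families and moduli of ring domains are distorted by at most the maximal dilatation, every extremal-distance quantity of $S$ is comparable to the corresponding quantity of a $\mathbb{Z}$-quotient of the model $\mathbb{C}\setminus\mathbb{Z}$.

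The heart of the argument is to compare, near an end of the cylinder, the moduli of the successive annular ``layers'' separating consecutive groups of punctures, together with the number of punctures each layer carries. In the flat metric on $C$ the modulus of the layer between two consecutive puncture-levels equals the gap of their heights, and since the projection is a local isometry the non-clumping hypothesis forces the punctures of $S$ to be uniformly separated; hence each layer of bounded modulus carries at most $O(d_0)$ punctures. On the model side, by contrast, the integer punctures accumulate toward the end of $\mathbb{C}\setminus\mathbb{Z}$ at a definite conformal rate dictated by their linear spacing, and this accumulation cannot be realized by layers each carrying only boundedly many punctures. Transporting the layer moduli and puncture counts across $\bar f$, using their quasi-invariance, I would read off a contradiction, forcing $A$ to clump after all.

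The main obstacle is isolating the correct quasi-invariant. Pointwise porosity is too weak here: Theorem~\ref{thmA} already shows a tame set must be porous, yet a $\mathbb{Z}$-periodic non-clumping set can itself be porous (place the punctures on the imaginary axis at exponentially growing heights and $\mathbb{Z}$-periodize), so the distinction between clumped and spread punctures is invisible to the density quantities $d$ and $\tilde d$ and must instead be detected by a genuinely conformal comparison of continua. The delicate point is therefore to fix an extremal-distance quantity that is simultaneously quasi-invariant under $f$, explicitly computable for $\mathbb{C}\setminus\mathbb{Z}$, and provably bounded, under non-clumping, strictly below the value the model demands. Additional technical care will be needed to treat a general $A_0$ whose punctures need not lie on a line, to handle accumulation at both ends of the cylinder at once, and to make the per-layer puncture bound uniform across all levels.
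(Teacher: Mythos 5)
The statement you set out to prove is Conjecture~I of the paper, and the first thing to be clear about is that the paper itself does \emph{not} prove it: the paper proves only the strictly weaker Theorem~\ref{thmB} (if $A$ has the automorphism property \emph{and is tame}, then $A$ must ``clump'', i.e.\ for every $\varepsilon>0$ and $d\in\mathbb{N}$ some disk $D(a,\varepsilon)$ contains at least $d$ points of $A$), and it exhibits Example~\ref{exstrange} precisely to show that this theorem leaves the conjecture undecided. Your proposal has exactly the same gap, and it is structural, not technical. After your correct reduction of $h$ to the translation $z\mapsto z+1$ (this matches the paper's Lemma~\ref{lemma1}), you announce you will ``argue the contrapositive: assuming $A$ does not clump \dots\ I will show $A$ is not tame.'' But that implication is the contrapositive of Theorem~\ref{thmB}, not of Conjecture~I. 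The contrapositive of the conjecture is: if $A$ is tame, then no infinite-order automorphism with infinitely punctured quotient exists --- with no non-clumping hypothesis available to you. Sets that satisfy the automorphism hypothesis and \emph{do} clump, such as Example~\ref{exstrange}, are the entire remaining content of the conjecture, and your plan never addresses them. So even if every sketched step were filled in, you would have reproved Theorem~\ref{thmB}, not the stated conjecture.

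Separately, even the Theorem~\ref{thmB} portion of your sketch is not a proof: you concede yourself that the key quasi-invariant (the ``layer'' quantity that is simultaneously qc-quasi-invariant, computable for $\mathbb{C}\setminus\mathbb{Z}$, and bounded under non-clumping) has not been identified, and your quotient formulation has a technical defect --- the conjugated map $g=f^{-1}\circ h\circ f$ is only quasiconformal, so $(\mathbb{C}\setminus\mathbb{Z})/\langle g\rangle$ is not the standard punctured cylinder and its punctures are not the projections of $\mathbb{Z}$ in any controlled way, which makes ``the corresponding quantity of a $\mathbb{Z}$-quotient of the model'' undefined. The paper's actual proof of Theorem~\ref{thmB} avoids quotients entirely and works upstairs: Lemma~\ref{lemma2} is a pigeonhole argument on the curves $f^{-1}(\{\mathrm{Im}\,z=a_n\})$ showing $\sup_{a\in A}\bigl|f^{-1}(a)-f^{-1}(a-1)\bigr|=+\infty$, and Lemma~\ref{lemma3} combines Vuorinen's lower bound for extremal distance with a ring-domain upper bound; Theorem~\ref{thm} then produces an interval $[n-d,n]$ with image of arbitrarily small diameter, which yields the clumping. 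If you want a rigorous route to the weaker statement, that is the one to follow; for Conjecture~I itself, a genuinely new idea handling clumped configurations is still required, and as far as this paper goes the question remains open.
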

We can now rephrase Conjecture I as follows.\\

Let $T_0=\bigcup_{n\in \mathbb{N}} p^n_{\ast} \left( T(R_n)\right)$, namely $T_0$ is a subspace
of $T(\mathbb{C}\setminus\mathbb{Z})$ which simultaneously describes all quasiconformal deformations of all Riemann
surfaces of finite type $(0,n)$ with $n\geq 3$. Further, let $\bar{T}$ be the set of all $[S,f]
\in T(\mathbb{C}\setminus\mathbb{Z})$, the \tei equivalence class of the quasiconformal homeomorphism $f:R\rightarrow S$, such that there exists an automorphism of infinite order
in ${\rm Aut}(S)$. Then Conjecture I implies
\begin{conjecturer}
\hspace{5ex} $\displaystyle \bar{T}=\bigcup_{[f]\in {\rm Mod}(\mathbb{C}\setminus\mathbb{Z})} [f]_{\ast}\left(T_0\right)$.
\end{conjecturer}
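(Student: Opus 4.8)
The assertion to be proved is that Conjecture~I implies the displayed identity for $\bar T$; I sketch a plan for this implication. The plan is to prove the two inclusions separately, invoking Conjecture~I only for $\bar T\subseteq\bigcup_{[f]}[f]_\ast(T_0)$. The first thing I would record is that each side depends only on the underlying Riemann surface of a point, not on its marking. For the right-hand side this holds because it is $\mathrm{Mod}(\mathbb{C}\setminus\mathbb{Z})$-invariant by construction, and the $\mathrm{Mod}$-orbit of any $[S,f]$ is precisely the set of all $[S,f']$ sharing the underlying surface $S$: given $f'$, the composition $f'^{-1}\circ f$ is a quasiconformal self-homeomorphism of $\mathbb{C}\setminus\mathbb{Z}$, hence represents a mapping class carrying $[S,f]$ to $[S,f']$. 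For the left-hand side it is immediate, since the defining property that $\mathrm{Aut}(S)$ contain an element of infinite order makes no reference to $f$. Thus the identity reduces to the statement that a surface $S$ quasiconformally equivalent to $\mathbb{C}\setminus\mathbb{Z}$ carries an infinite-order automorphism if and only if it is the underlying surface of some point of $T_0$.

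The structural input I would isolate next is a description of the infinite-order automorphisms of such an $S$. Writing $S=\mathbb{C}\setminus A$ with $A$ closed, discrete and infinite, any automorphism extends across the isolated punctures by the removable singularity theorem to an injective entire map, hence to an affine map $z\mapsto az+b$ that preserves $A$. Since $A$ is discrete in $\mathbb{C}$ and infinite, the case $|a|\neq 1$ would produce either an interior fixed point or a finite accumulation point of $A$, and an infinite-order rotation $|a|=1,\ a\neq 1$ would make an orbit dense on a circle; both are impossible. Hence every infinite-order automorphism $h$ is a parabolic translation $z\mapsto z+b$, it acts freely and properly discontinuously, and the quotient $S/\langle h\rangle$ is a cylinder with punctures removed, that is, a surface of genus $0$, with $S\to S/\langle h\rangle$ an infinite cyclic covering of the same topological type as $p^{n}$.

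Granting these two reductions, the inclusion $\bigcup_{[f]}[f]_\ast(T_0)\subseteq\bar T$ needs nothing from Conjecture~I: a point of $p^n_\ast(T(R_n))$ has underlying surface an infinite cyclic cover of a deformed $(n+2)$-punctured sphere, whose deck transformation is an infinite-order automorphism. For the reverse inclusion I would take $[S,f]\in\bar T$, write $S=\mathbb{C}\setminus A$ with $A$ tame, and choose an infinite-order automorphism $h$. By the structural step, $\Sigma:=S/\langle h\rangle$ has genus $0$; by Conjecture~I applied to the tame set $A$, it cannot have infinitely many punctures, so it is a finitely-punctured sphere, quasiconformally equivalent to $R_n$ for the $n$ determined by its puncture count. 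Then $S$, being the translation cover of $\Sigma$, is conformally the underlying surface of a point $p^n_\ast[\Sigma,g]\in T_0$ once $g:R_n\to\Sigma$ is chosen to carry the two cylinder-end punctures to the corresponding punctures of $\Sigma$, and the orbit-equals-fiber reduction then places $[S,f]$ in $\bigcup_{[f]}[f]_\ast(T_0)$.

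I expect the main obstacle to be the conversion of this topological picture into quasiconformal data in the reverse inclusion. Conjecture~I contributes only the finiteness of the punctures of $\Sigma$; everything else rests on showing that the covering $S\to\Sigma$ is genuinely of type $p^n$ and not some other infinite cyclic cover, equivalently that the homomorphism $\pi_1(\Sigma)\to\mathbb{Z}$ defining it is the standard one that unwraps exactly the two distinguished punctures. Verifying this, and simultaneously choosing the quasiconformal identification $g$ so that the induced cover matches $S$ conformally, is where the planarity of $S$ and a careful accounting of its single non-isolated end must be used; the freedom to permute punctures by quasiconformal maps is what lets one absorb the marking into the $\mathrm{Mod}$-orbit. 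The forward inclusion, by contrast, is formal and independent of Conjecture~I.
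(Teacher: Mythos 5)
The first thing to say is that the paper offers no proof of this statement at all: it is Conjecture~II, and even the implication ``Conjecture~I implies Conjecture~II'' is simply asserted in the introduction without argument. So there is no proof of the author's to compare yours against; your proposal, if completed, supplies something the paper leaves entirely implicit.

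On its own terms your plan is correct, and the step you single out as the main obstacle is in fact the easiest to close. The orbit-equals-fiber reduction is sound: the ${\rm Mod}(\mathbb{C}\setminus\mathbb{Z})$-orbit of $[S,f]$ is exactly the set of points whose underlying surface is conformally equivalent to $S$, and both sides of the identity are ${\rm Mod}$-saturated. Your classification of infinite-order automorphisms as translations is precisely the paper's Lemma~3.1, proved there the same way via the removable singularity theorem. Applying the contrapositive of Conjecture~I to the tame set $A$ gives that $\Sigma=(\mathbb{C}\setminus A)/\langle h\rangle$ has finitely many punctures; after the affine normalization $h(z)=z+1$ (used also in the paper's Theorem~3.4), this means $\Sigma\cong\hat{\mathbb{C}}\setminus\left(\{0,\infty\}\cup E\right)$, where $E$ is the image of $A$ under $z\mapsto e^{2\pi iz}$ and $\#E=n$ is finite. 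Now the covering-type verification you worry about: since $S=\mathbb{C}\setminus A\subset\mathbb{C}$ and the covering $S\to\Sigma$ is the restriction of $z\mapsto e^{2\pi iz}\colon\mathbb{C}\to\mathbb{C}^{*}$, its defining homomorphism $\pi_1(\Sigma)\to\mathbb{Z}$ is the winding number about $0$, which sends loops about points of $E$ to $0$ and loops about the two ends $0,\infty$ to $\pm1$; the homomorphism defining $p^n$ sends loops about the $n$-th roots of unity to $0$ and loops about $0,\infty$ to $\pm1$. Both factor through $H_1$ of an $(n+2)$-punctured sphere, hence are determined by these values on puncture loops. Consequently any orientation-preserving diffeomorphism $g$ of $\hat{\mathbb{C}}$ with $g(0)=0$, $g(\infty)=\infty$, carrying the roots of unity onto $E$ (such a $g$ exists and is quasiconformal) transports $p^n$ exactly to the covering $S\to\Sigma$, so the underlying surface of $p^n_{\ast}[\Sigma,g]\in T_0$ is conformally $S$, and the orbit reduction finishes the reverse inclusion. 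The forward inclusion is, as you say, formal and independent of Conjecture~I. One last remark: your argument establishes only the implication I~$\Rightarrow$~II, not an equivalence, but that is exactly what the paper claims.
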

Here, ${\rm Mod}(\mathbb{C}\setminus\mathbb{Z})$ is the Teichm\"uller-Modular group of $\mathbb{C}\setminus \mathbb{Z}$.\ 
The subspace $T_0$ is not closed in $T(\mathbb{C}\setminus\mathbb{Z})$. However, $T_0$ is separable by the definition,
further geodesically convex with respect to the \tei metric of $T(\mathbb{C}\setminus\mathbb{Z})$ by McMullen's theorem.
Remark that, Conjecture I and II are not necessary to prove these properties.\

\section{Proof of Theorem\ \ref{thmA}}

\subsection{Quasidisks and uniform domains}
A domain $D \subset \hat{\mathbb{C}}$ is called a quasidisk
if there exists a quasiconformal homeomorphism $f:\hat{\mathbb{C}}
\rightarrow \hat{\mathbb{C}}$ such that $f(\mathbb{D})=D$, where 
$\hat{\mathbb{C}}$ and $\mathbb{D}$ denote the Riemann sphere and the unit disk, respectively. 
Next, for a constant $c\geq 1$, a domain $D \subset \mathbb{C}$ is called a $c$-uniform domain
if arbitrary 
two points $z_1,\ z_2 \in D$ can be joined by a rectifiable curve $\gamma \subset D$ 
which satisfies 
\begin{enumerate}\setlength{\leftskip}{2cm}
\item $\ell(\gamma)\leq c\ |z_1-z_2|$,\vspace{1ex}
\item for all $z\in \gamma$,\ \ $\displaystyle \min _{j=1,2}\ell(\gamma[z,z_j]) \leq c\ {\rm dist}(z,\partial D)$.
\end{enumerate}
Here $\gamma[z,z_j]$ denotes the subcurve of $\gamma$ which joins $z$ and $z_j$.
Further $D$ is called a uniform domain if $D$ is a $c$-uniform domain for some 
constant $c\geq 1$.

There is a lot of characterizations
of quasidisks. Many of those are collected in \cite{geh2} by F. W. Gehring. 
Actually, the uniformity of domains is one of the characterizations of quasidisks:
for a simply connected proper subdomain $D\subset \mathbb{C}$, $D$ is
a quasidisk if and only if $D$ is a uniform domain
(cf. \cite{gol1}, \cite{geh6} and \cite{osg1}).

\subsection{Proof of Theorem \ref{thmA}}
\begin{thma}
Let $A\in \mathscr{P}$. If there exists a closed discrete subset $B\subset \mathbb{R}$
such that $\mathbb{C}\setminus B$ is quasiconformally equivalent
to $\mathbb{C}\setminus A$, then 
\[
\sup_{z\in \mathbb{C},\ r>0}\  \frac{r}{d(z,r;A)}\ ,\ \ \sup_{z\in \mathbb{C},\ r>0}\  \frac{r}{\tilde{d}(z,r;A)}<+\infty.
\]
\end{thma}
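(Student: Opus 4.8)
The plan is to show that all the punctures of $A$ lie on a single quasiline, and then to exploit the fact that the complement of a quasiline cannot be ``too dense'' near any point --- which is precisely the interior corkscrew property enjoyed by uniform domains.

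First I would reduce the statement to one about a quasiline. Let $g:\mathbb{C}\setminus B\to\mathbb{C}\setminus A$ be a quasiconformal homeomorphism. By the removable singularity theorem already quoted in the introduction, $g$ extends to a quasiconformal homeomorphism $G:\hat{\mathbb{C}}\to\hat{\mathbb{C}}$ which carries the puncture set $B\cup\{\infty\}$ onto $A\cup\{\infty\}$. Since $A$ is infinite, closed and discrete in $\mathbb{C}$, it is unbounded, so $\Gamma:=G(\hat{\mathbb{R}})$ is an unbounded curve; being the quasiconformal image of the circle $\hat{\mathbb{R}}$ it is a quasicircle, hence a quasiline through $\infty$, whence $G(\infty)=\infty$ and $G(B)=A$. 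In particular $A\subset\Gamma$, so ${\rm dist}(w,A)\ge{\rm dist}(w,\Gamma)$ for every $w\in\mathbb{C}$. Consequently it suffices to produce, in each disk $\bar{D}_r(z)$, a point whose distance to $\Gamma$ is at least a fixed multiple of $r$.

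Next I would bring in the uniform domain structure. The quasiline $\Gamma$ divides $\hat{\mathbb{C}}$ into two quasidisks; take $\Omega=G(\mathbb{H})$, where $\mathbb{H}$ is the upper half-plane. Since $\infty\in\partial\mathbb{H}$, we have $\infty\in\partial\Omega$, so $\Omega$ is a simply connected proper subdomain of $\mathbb{C}$, and by the Gol$'$d\v ste\v \i n--Vodop$'$janov and Gehring--Osgood characterization of quasidisks recalled above it is a $c$-uniform domain with $c$ depending only on the maximal dilatation of $G$. From the defining cigar condition one extracts the interior corkscrew property: there is $b=b(c)\in(0,1)$ such that for every $\xi\in\partial\Omega=\Gamma$ and every $\rho>0$ there is $w\in\Omega$ with $|w-\xi|\le\rho$ and ${\rm dist}(w,\Gamma)\ge b\rho$. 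One obtains this by joining a point of $\Omega$ close to $\xi$ to a far-away point of $\Omega$ by a uniform curve and selecting the point where that curve first meets the circle $|w-\xi|=\rho/2$; condition (2) in the definition then forces that point to lie at distance $\gtrsim\rho$ from $\partial\Omega$. Here the unboundedness of $\Omega$ is used, so that a genuinely distant target point is always available.

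Finally I would run the elementary estimate. Fix $z\in\mathbb{C}$ and $r>0$. If ${\rm dist}(z,\Gamma)\ge r/2$, the centre $z$ itself works. Otherwise pick $\xi\in\Gamma$ with $|z-\xi|<r/2$ and apply the corkscrew property at $\xi$ with $\rho=r/2$, obtaining $w\in\Omega$ with $|w-\xi|\le r/2$, hence $|w-z|<r$, and ${\rm dist}(w,\Gamma)\ge br/2$. In either case $d(z,r;A)\ge{\rm dist}(w,A)\ge{\rm dist}(w,\Gamma)\ge(b/2)\,r$, so that $\sup_{z,r}r/d(z,r;A)\le 2/b<+\infty$. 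Because $\bar{D}_r(z)\subset Q_r(z)$ gives $\tilde{d}(z,r;A)\ge d(z,r;A)$, the bound for $\tilde{d}$ follows immediately. I expect the genuinely substantive step to be the passage from the hypothesis to the statement ``$A$ lies on a quasiline whose complementary components are uniform domains''; once that structure is in hand, the corkscrew property and the final estimate are routine. The one point demanding care is that the corkscrew constant $b$, and hence the final bound, depends only on the dilatation of $G$ and is uniform in $z$, $r$ and the chosen boundary point $\xi$.
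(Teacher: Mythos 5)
Your proposal is correct, and it runs on the same engine as the paper's proof: extend the quasiconformal map to a self-homeomorphism of $\hat{\mathbb{C}}$, observe that the image of a half-plane is a quasidisk and hence a $c$-uniform domain, and use the cigar condition (2) to produce a point of $\bar{D}_r(z)$ lying at distance comparable to $r$ from $\partial\Omega\supset A$. But the way you cover the whole plane is genuinely different, and tidier. The paper splits into three cases according to the position of $z$: for $z\in D=f(\mathbb{H})$ it argues directly (choose $z_2\in D$ with $|z_1-z_2|=2r/c$, join by a uniform curve, and take the arclength midpoint $z_0$, which lands in $\bar{D}_r(z_1)$ with ${\rm dist}(z_0,\partial D)\geq r/c^2$); for $z\in\partial D\setminus\{\infty\}$ it invokes V\"ais\"al\"a's theorem (\cite[Theorem 2.11]{vai1}) to extend this argument to boundary points; and for $z\in\mathbb{C}\setminus\bar{D}$ it repeats everything for the image of the lower half-plane, producing a second constant $c'$. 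You instead exploit $A\subset\Gamma=\partial\Omega$ to reduce the problem to finding points of $\bar{D}_r(z)$ far from $\Gamma$, and dichotomize on whether ${\rm dist}(z,\Gamma)\geq r/2$: in the far case $z$ itself is the deep point, and in the near case your corkscrew point at a nearby $\xi\in\Gamma$ serves. This needs only one of the two complementary quasidisks, replaces the appeal to V\"ais\"al\"a's boundary theorem by an elementary first-crossing derivation of the corkscrew property from the cigar condition, and your treatment of $\tilde{d}$ is also leaner (only the inclusion $\bar{D}_r(z)\subset Q_r(z)$ is needed, not the two-sided comparison). What the paper's arrangement buys in exchange is that it never has to isolate and prove the corkscrew lemma: the boundary case is outsourced to a citation and the exterior case to symmetry.

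One small caveat: the inference ``$\Gamma$ is a quasiline through $\infty$, whence $G(\infty)=\infty$'' is loose as written --- unboundedness gives $\infty\in\Gamma$, not that $\infty$ is fixed. The clean justification is that $\infty$ is the unique accumulation point of $B\cup\{\infty\}$ and of $A\cup\{\infty\}$, and $G$ restricts to a homeomorphism between these two sets, so it must fix $\infty$. In any case your argument only really uses $A\subset\Gamma$ and the unboundedness of $\Omega$ inside $\mathbb{C}$, both of which follow from $\infty\in\Gamma=\partial\Omega$ without knowing $G(\infty)=\infty$, so the gap is cosmetic.
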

\begin{proof} 
$\bar{D}_r(z) \subset Q_r(z) \subset
 \bar{D}_{\sqrt{2}r}(z)$ holds for all $z \in \mathbb{C}$ and $r>0$, so that,
$d(z,r;A) \leq \tilde{d}(z,r;A) \leq d(z,\sqrt{2}r;A). \label{relation}$ Thus
\begin{eqnarray*}
\frac{1}{\sqrt{2}}\ \sup_{z\in \mathbb{C},\ r>0}\  \frac{r}{\tilde{d}(z,r;A)}\leq \sup_{z\in \mathbb{C},\ r>0}\  \frac{r}{d(z,r;A)}\leq \sup_{z\in \mathbb{C},\ r>0}\  \frac{r}{\tilde{d}(z,r;A)}. \label{futoshiki}
\end{eqnarray*} 
Therefore it suffices to prove that Theorem \ref{thmA} holds for $d$.\\

Let $f:\mathbb{C}\setminus B \rightarrow \mathbb{C}\setminus A$ be a quasiconformal 
homeomorphism. 
It is known from the removable singularity theorem that $f$ can be extended to
the quasiconformal homeomorphism from $\mathbb{C}$ to $\mathbb{C}$
(cf. see \cite[Theorem\ 17.3.]{vai2}\ and \cite[p.14]{geh2}).
Moreover $\infty$ is also a removable singularity for extended $f$. Consequently, $f$ can be 
extended to the quasiconformal homeomorphism $f:\hat{\mathbb{C}} \rightarrow \hat{\mathbb{C}}$
(we denote the extended $f$ by the same letter $f$). Then
$D=f(\mathbb{H})$ becomes a quasidisk, where $\mathbb{H}$ denotes
the upper half plane. Since $f(\infty)=\infty$, $f(\mathbb{H})\subset \mathbb{C}$.
Therefore $D$ becomes a $c$-uniform domain for some constant $c\geq 1$. In the following argument,
remark that the restriction $f|_B :B\rightarrow A$ is bijective. \\

Let $z_1 \in D$ and $r>0$. Since $D$ is not bounded, we can choose $z_2 \in D$
such that
\[
|z_1-z_2|=\frac{2r}{c}.
\]
Then from the uniformity of $D$, there exists a rectifiable curve 
$\gamma \subset D$ which joins $z_1$ and $z_2$ with the properties that
\begin{enumerate}\setlength{\leftskip}{2cm}
\item $\ell(\gamma)\leq c\ |z_1-z_2|=2r$,\vspace{1ex}
\item for all $z\in \gamma$,\ \ $\displaystyle \min _{j=1,2}\ell(\gamma[z,z_j]) \leq c\ {\rm dist}(z,\partial D)$.
\end{enumerate}
By using the intermediate value theorem for $\gamma[z,z_1]$, it turns out there exists $z_0\in \gamma$ such
that $\ell(\gamma[z_0,z_1])=\ell(\gamma)/2$. Then
\[
\min_{j=1,2} \ell(\gamma[z_0,z_j])=\ell(\gamma[z_0,z_1])=\ell(\gamma[z_0,z_2]).
\]
Therefore
\begin{eqnarray*}
2r\geq \ell(\gamma)=\ell(\gamma[z_0,z_1])+\ell(\gamma[z_0,z_2])
      =2\ell(\gamma[z_0,z_1]) \geq 2|z_0-z_1|.
\end{eqnarray*}
Namely $z_0 \in \bar{D}_{r}(z_1)$. From the above,
\begin{eqnarray*}
\frac{2r}{c}=|z_1-z_2| &\leq &\ell(\gamma)\\
         &=&2\min_{j=1,2} \ell(\gamma[z_0,z_j])\\
         &\leq & 2c\ {\rm dist}(z_0,\partial D)
         \leq  2c\ {\rm dist}(z_0, A) \leq 2c\ d(z_1,r;A).
\end{eqnarray*}
Thus we obtain $\displaystyle \frac{r}{d(z_1,r;A)}\leq c^2$. Recall that $z_1\in D$ and
$r>0$ are arbitrary. 

Moreover we can apply the above argument to $z_1 \in\partial D \setminus \{\infty\}$ and $r>0$, 
because of V\"ais\"al\"a's theorem \cite[Theorem\ 2.11.]{vai1}.

Finally, in the case of $z_1 \in \mathbb{C}\setminus \bar{D}$ and $r>0$, 
we apply the same argument to the image of the lower half plane under $f$. Then it turns out there exists a constant
$c'\geq 1$ which does not depend on $z_1$ and $r$, and satisfies
\[
\frac{r}{d(z_1,r;A)}\leq c'^2.
\]
Hence we obtain the claim.
\end{proof}

\subsection{Some corollaries of Theorem \ref{thmA}} \label{coroA}
In this section, we shall show some corollaries from Theorem \ref{thmA}.
\begin{coro} \label{coro1}
For arbitrary $s>0$, the discrete set $A_s=\mathbb{Z}+ i\left\{ \pm n^s \mid n=0,1,2,\cdots \right\}$
is not tame. In particular $\mathbb{Z}+i\mathbb{Z}$ is not tame.
\end{coro}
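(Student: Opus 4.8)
The plan is to derive this from Theorem \ref{thmA} by contraposition. Since $\mathbb{Z}$ is a closed discrete subset of $\mathbb{R}$, if $A_s$ were tame then $\mathbb{C}\setminus\mathbb{Z}$ would be quasiconformally equivalent to $\mathbb{C}\setminus A_s$, and taking $B=\mathbb{Z}$ in Theorem \ref{thmA} would force
\[
\sup_{z\in\mathbb{C},\ r>0}\ \frac{r}{d(z,r;A_s)}<+\infty .
\]
So it is enough to exhibit a single family of disks along which this quotient diverges. First I would confirm that $A_s\in\mathscr{P}$: the heights $\{\pm n^s\}$ tend to $\pm\infty$, so even in the regime $0<s<1$, where the gaps between consecutive heights shrink, they have no finite accumulation point; together with the integer real parts this makes $A_s$ closed, discrete and infinite. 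I would then fix $z=0$ and prove that $d(0,r;A_s)=o(r)$ as $r\to+\infty$, which gives $r/d(0,r;A_s)\to+\infty$ and hence non-tameness.

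For the estimate I would exploit the product structure of $A_s$, whose points lie on the vertical lines $\mathrm{Re}=m$ $(m\in\mathbb{Z})$ and the horizontal lines $\mathrm{Im}=\pm n^s$. The horizontal spacing is uniformly $1$, so every $w\in\bar{D}_r(0)$ lies within horizontal distance $1/2$ of some admissible column. The only quantity that can grow with $r$ is the vertical gap $g_n=(n+1)^s-n^s$ between consecutive rows; writing $G_r=\max\{g_n:\ n^s\le r\}$ for the largest gap the disk can meet, any point of $\bar{D}_r(0)$ is within vertical distance $G_r/2$ of some row, whence
\[
d(0,r;A_s)\le\sqrt{\Big(\tfrac{1}{2}\Big)^2+\Big(\tfrac{G_r}{2}\Big)^2}.
\]

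It then remains to bound $G_r$. For $0<s\le1$ the map $t\mapsto t^s$ is concave, so $g_n$ is non-increasing and $G_r\le g_0=1$; for $s>1$ it is convex and increasing, and the mean value theorem combined with $n\le r^{1/s}$ yields $g_n\le s(n+1)^{s-1}=O\big(r^{1-1/s}\big)$. In either regime $G_r=o(r)$, so $d(0,r;A_s)=o(r)$ and $r/d(0,r;A_s)\to+\infty$; specialising to $s=1$ recovers that $\mathbb{Z}+i\mathbb{Z}$ is not tame. The step I expect to be the genuine obstacle is the uniform control of the vertical gap \emph{inside} the disk: one must verify that for $w\in\bar{D}_r(0)$ the relevant gap index $n$ always satisfies $n\le r^{1/s}$, so that $g_n\le G_r$ and the top of the disk never lands in a longer gap than has been accounted for. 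The monotonicity of $g_n$ in each regime is exactly what secures this, and the remaining pieces — the elementary gap estimate and the discreteness bookkeeping when $0<s<1$ — are routine.
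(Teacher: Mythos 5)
Your proof is correct and follows essentially the same route as the paper: both argue via Theorem \ref{thmA} with $B=\mathbb{Z}$, centering at $z=0$ and showing the ratio diverges as $r\to\infty$ because the largest vertical gap of $A_s$ met within distance $r$ of the origin is $o(r)$. The only difference is cosmetic — you bound $d$ on disks uniformly in $r$ (concavity for $s\le 1$, the mean value theorem for $s>1$), while the paper evaluates $\tilde d$ on squares exactly, getting $\tilde d(0,r;A_s)=\sqrt{2}/2$ for $s\le 1$ and computing $\tilde d(0,n^s;A_s)$ along the sequence $r=n^s$ for $s>1$.
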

\begin{proof}
If $s\leq 1$, it follows that $\tilde{d}(0,r;A_s)=\sqrt{2}/2$ for all $r>1$ 
(see Figure \ref{zu1}). Therefore
\[
\lim_{r\rightarrow +\infty} \frac{r}{\tilde{d}(0,r;A_s)}=+\infty.
\]
Since $\mathbb{Z}$ is closed and discrete in $\mathbb{R}$, it turns out that $\mathbb{C}\setminus A_s$
is not quasiconformally equivalent to $\mathbb{C}\setminus \mathbb{Z}$ from Theorem \ref{thmA}.
Namely $A_s$ is not tame.

\begin{figure}[htbp]
\begin{tabular}{cc}
 \begin{minipage}{0.5\hsize}
  \begin{center}
   \includegraphics[width=60mm]{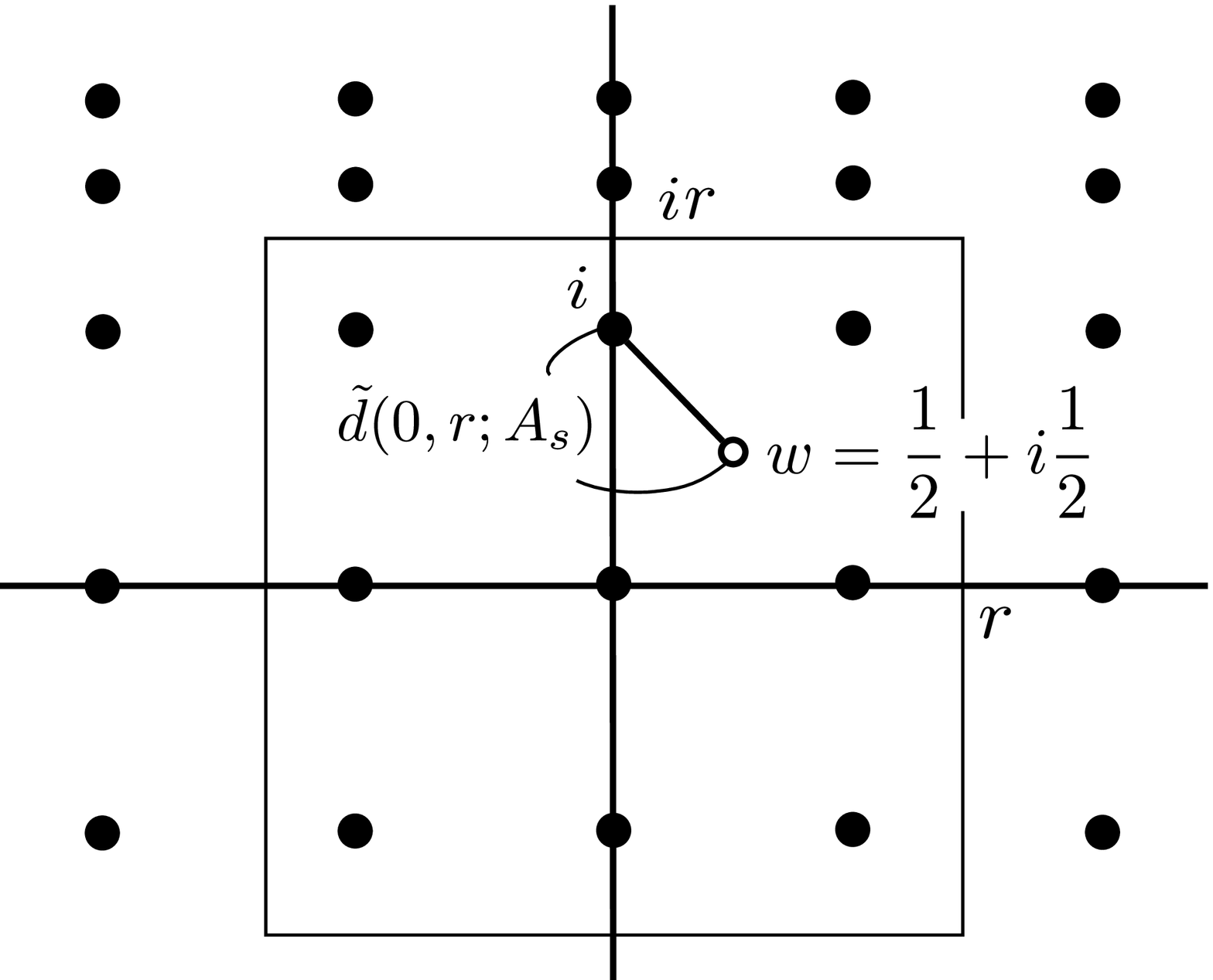}
  \end{center}
  \caption{:　$s\leq 1$}\label{zu1}
 \end{minipage}
 \begin{minipage}{0.5\hsize}
  \begin{center}
   \includegraphics[width=64mm]{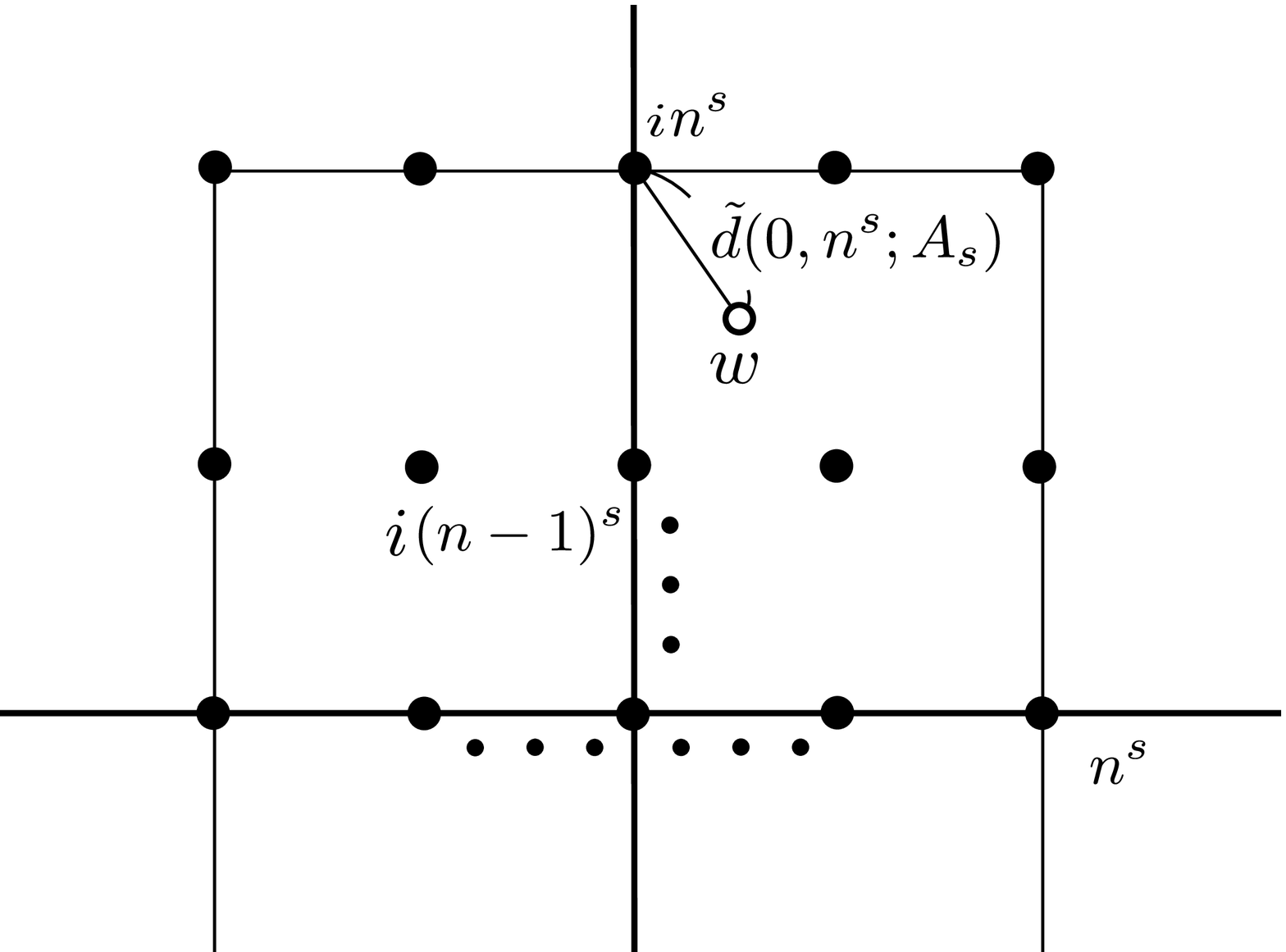}
  \end{center}
  \caption{:　$s>1$}\label{zu2}
 \end{minipage}
\end{tabular}
\end{figure} 

Next, if $s>1$, 
\[
\left\{2\tilde{d}(0,n^s;A_s)\right\}^2= \left(n^s-(n-1)^s\right)^2+1
\]
holds for all $n\in \mathbb{N}$ (see Figure \ref{zu2}). Therefore
\begin{eqnarray*}
\frac{n^s}{\tilde{d}(0,n^s;A_s)}&=&\frac{2n^s}{\sqrt{\left(n^s-(n-1)^s\right)^2+1}}\\
&=&\frac{2}{\sqrt{\displaystyle \left(1-\left(1-\frac{1}{n}\right)^s\right)^2+\left(\frac{1}{n}\right) ^{2s}}} \rightarrow +\infty\ \ (n\rightarrow +\infty).
\end{eqnarray*}
Similarly it is revealed that $A_s$ is not tame.
\end{proof}
\newpage
\begin{coro}
For arbitrary $s>0$, the discrete set $A'_s=\mathbb{Z}+i\left\{ n^s \mid n=0,1,2,\cdots \right\}$
is not tame. In particular $\mathbb{Z}+i\mathbb{N}$ is not tame.
\end{coro}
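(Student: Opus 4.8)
The plan is to argue exactly as in the previous corollary, invoking Theorem~\ref{thmA} with $B=\mathbb{Z}$. Since $\mathbb{Z}$ is closed and discrete in $\mathbb{R}$, tameness of $A'_s$ would mean that $\mathbb{C}\setminus A'_s$ is quasiconformally equivalent to $\mathbb{C}\setminus\mathbb{Z}$, and Theorem~\ref{thmA} would then force $\sup_{z,r} r/\tilde{d}(z,r;A'_s)<+\infty$. Hence it suffices to exhibit centres $z_k\in\mathbb{C}$ and radii $r_k>0$ along which this ratio diverges.

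The one genuinely new point, compared with $A_s$, is that $A'_s$ is contained in the closed upper half-plane $\{\,{\rm Im}\,w\geq 0\,\}$. Consequently the base point $z=0$ used for the symmetric set is useless here: a square $Q_r(0)$ with $r$ large reaches far into the point-free region $\{\,{\rm Im}\,w<0\,\}$, so its lowest points lie at distance $\approx r$ from $A'_s$ and $\tilde d(0,r;A'_s)\approx r$, giving only a bounded ratio. First I would therefore push the centre up the imaginary axis, to a height $H_k\to+\infty$ where, locally, $A'_s$ looks like a genuine two-sided lattice (with points both above and below the centre), and choose the radius so that the square stays inside $\{\,{\rm Im}\,w\geq 0\,\}$.

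For $s\le 1$ I would take $z_k=iH_k$ and $r_k=H_k/2$, so that $Q_{r_k}(z_k)\subset\{\,H_k/2\le{\rm Im}\,w\le 3H_k/2\,\}$. Inside this square the horizontal spacing of $A'_s$ is $1$ and the vertical gaps are $n^s-(n-1)^s$ at the relevant indices; for $s=1$ these equal $1$, while for $s<1$ the largest gap occurs at the bottom edge and tends to $0$ as $H_k\to+\infty$. Thus $\tilde d(z_k,r_k;A'_s)$ stays bounded (by $\sqrt2/2$), while $r_k\to+\infty$, so the ratio diverges. For $s>1$ the gaps grow with height, so I would instead centre on the topmost large gap: with $z_n=i\,\tfrac{(n-1)^s+n^s}{2}$ and $r_n=\tfrac{(n-1)^s+n^s}{2}$, the bottom edge of $Q_{r_n}(z_n)$ sits on $\mathbb{R}$ and the top edge at height $\approx 2n^s$. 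Every vertical gap met inside the square is then $O(n^{s-1})$ (the gap near height $m^s$ is $\approx s\,m^{s-1}$, and $m\lesssim 2^{1/s}n$ throughout the square), whence $\tilde d(z_n,r_n;A'_s)=O(n^{s-1})$ while $r_n\sim n^s$, and therefore $r_n/\tilde d(z_n,r_n;A'_s)=\Omega(n)\to+\infty$.

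The routine but essential step, the analogue of the explicit computation in Corollary~\ref{coro1}, is the gap bookkeeping: one must check that within each chosen square every point of the $A'_s$-free space is bracketed by lattice points of $A'_s$, so that no spurious large distance appears near the bottom edge on $\mathbb{R}$ or near the top edge (here using that $A'_s$ continues to $+i\infty$), and that the maximal vertical gap inside the square is controlled as claimed (bounded for $s\le1$, of order $n^{s-1}$ for $s>1$). This is where the asymmetry of $A'_s$ makes the estimate slightly more delicate than for $A_s$, since the extremal point now lies in the interior of the upper half-plane rather than on the symmetry axis.
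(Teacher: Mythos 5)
Your proposal is correct and is essentially the paper's own argument: the decisive point in both is to re-centre the square on the positive imaginary axis at height comparable to the radius, so that it lies in the closed upper half-plane where the vertical gaps of $A'_s$ are small compared to the height, and then invoke Theorem~\ref{thmA} with $B=\mathbb{Z}$. The only difference is cosmetic: the paper handles all $s>0$ in a single computation by taking $z=\frac{n^s}{2}i$ and $r=\frac{n^s}{2}$, which reproduces exactly the expression $\frac{n^s}{\sqrt{\left(n^s-(n-1)^s\right)^2+1}}$ from Corollary~\ref{coro1}, whereas you split into the cases $s\le 1$ and $s>1$ with slightly different centres and radii.
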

\begin{proof}
Instead of $\tilde{d}(0,n^s;A_s)$ used in the proof of Corollary \ref{coro1},
we compute $\displaystyle \tilde{d}(n^si/2,n^s/2;A'_s)$. It follows that
\begin{eqnarray*}
\frac{\displaystyle \frac{n^s}{2}}{\tilde{d}\left(\displaystyle \frac{n^s}{2}i,\frac{n^s}{2};A'_s\right)}&=&\frac{n^s}{\sqrt{\left(n^s-(n-1)^s\right)^2+1}}\\
&=&\frac{1}{\sqrt{\displaystyle \left(1-\left(1-\frac{1}{n}\right)^s\right)^2+\left(\frac{1}{n}\right) ^{2s}}} \rightarrow +\infty\ \ (n\rightarrow +\infty).
\end{eqnarray*}
\end{proof}

\subsection{Example} \label{exa}
Let $A=\mathbb{Z}+i\left\{2^n \mid n=0,1,2,\cdots \right\}$. It seems that $A$ is similar to
$A_s$ and $A'_s$ of the above corollaries, however, we cannot decide whether
$A$ is tame or not from Theorem \ref{thmA}.

\section{Proof of Theorem\ \ref{thmB}}
\subsection{Extremal distances and Vuorinen's Theorem}
Let $D\subset \hat{\mathbb{C}}$ be a domain. For given continua $E, F\subset D$,
\[
\edd{E}{F}=\mode(\zokud{E}{F})
\]
is called the extremal distance between $E$ and $F$ in $D$, where $\mode$
denotes the $n$-modulus of a curve family and $\zokud{E}{F}$ denotes the family
of all rectifiable curves which join $E$ and $F$ in $D$. The $n$-modulus coincides with
the reciprocal of the extremal length introduced by L.\ V.\ Ahlfors and A.\ Beurling \cite{ahl3}.
It is well known that a sense preserving homeomorphism $f$ becomes $K$-quasiconformal for a constant $K\geq1$ if
and only if $f$ satisfies the following inequality for any curve family $\mathscr{F}$ in the domain of $f$.
\[
\frac{1}{K}\mode(\mathscr{F})\leq \mode(f(\mathscr{F})) \leq K\mode(\mathscr{F}).
\]

 The next useful lower bound for extremal distances was presented by M.\ Vuorinen in
 \cite[Lemma\ 4.7]{vuo3}. For each pair of disjoint continua $E, F\subset \hat{\mathbb{C}}$,
 it holds that
\[
\edc{E}{F} \geq \frac{2}{\pi} \log \left( 1+ \frac{\min\{{\rm diam}(E),{\rm diam}(F)\}}{{\rm dist}(E,F)} \right).
\]

\subsection{Some lemmas}
To prove Theorem \ref{thmB}, we shall prove some lemmas.
\begin{lemma} \label{lemma1}
Let $A\in \mathscr{P}$ and $h \in {\rm Aut}(\mathbb{C}\setminus A)$.
If ${\rm ord}(h)=\infty$, then $h$ can be written as $h(z)=z+b$ with
a certain $b\in \mathbb{C}\setminus \{0\}$.
\end{lemma}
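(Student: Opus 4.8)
The plan is to reduce the statement to elementary complex dynamics by first showing that any such $h$ extends to an affine automorphism $h(z)=\alpha z+\beta$ of the whole plane, and then ruling out the cases $\alpha\neq 1$ and $\beta=0$ by exploiting that $A$ is closed and discrete. So I would proceed in two stages: an extension step and a dynamical step.

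For the extension, I would first note that a closed discrete infinite subset $A\subset\mathbb{C}$ is necessarily unbounded and accumulates only at $\infty$, so each $a\in A$ is an isolated puncture of $\mathbb{C}\setminus A$ possessing a doubly connected neighbourhood $D(a,\varepsilon)\setminus\{a\}$, whereas every neighbourhood $\{|z|>R\}\setminus A$ of the end at $\infty$ contains infinitely many punctures and is hence of infinite connectivity. Because $h$ is a homeomorphism of $\mathbb{C}\setminus A$, it permutes the ends and must respect this topological distinction; consequently the puncture at each $a\in A$ is carried to a puncture at some $a'\in A$, and in particular $h(z)$ remains in a bounded set as $z\to a$. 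I would then invoke Riemann's removable singularity theorem to extend $h$ holomorphically across every $a\in A$ with $h(a)=a'\in A$, and apply the same to $h^{-1}$ to conclude that the extended map, still denoted $h$, is a biholomorphism of $\mathbb{C}$ onto itself carrying $A$ bijectively onto $A$; since a biholomorphic self-map of $\mathbb{C}$ is affine, this yields $h(z)=\alpha z+\beta$ with $\alpha\in\mathbb{C}\setminus\{0\}$. I expect this extension step, and specifically the justification that a puncture end cannot be sent to the end at $\infty$, to be the only genuinely delicate point.

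With $h(z)=\alpha z+\beta$ in hand, it remains to show that ${\rm ord}(h)=\infty$ forces $\alpha=1$. Assuming $\alpha\neq 1$, the map has a unique fixed point $z_0=\beta/(1-\alpha)\in\mathbb{C}$ and satisfies $h^n(z)=z_0+\alpha^n(z-z_0)$. I would pick $a\in A$ with $a\neq z_0$, which exists since $A$ is infinite, and track its orbit, which stays inside $A$ because $h(A)=A$. If $|\alpha|\neq 1$, then one of the sequences $\left(h^{n}(a)\right)_{n\geq 1}$ or $\left(h^{-n}(a)\right)_{n\geq 1}$ consists of distinct points of $A$ converging to $z_0$, making $z_0$ a finite accumulation point of $A$ and contradicting that $A$ is closed and discrete; hence $|\alpha|=1$. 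If in addition $\alpha$ were not a root of unity, the points $h^n(a)=z_0+\alpha^n(a-z_0)$ would be infinitely many distinct points on the compact circle $\{|z-z_0|=|a-z_0|\}$, so by Bolzano--Weierstrass $A$ would again have a finite accumulation point, which is impossible. Therefore $\alpha^m=1$ for some $m\in\mathbb{N}$, giving $h^m={\rm id}$ and ${\rm ord}(h)<\infty$, contrary to hypothesis. Thus $\alpha=1$, so $h(z)=z+\beta$, and $\beta\neq 0$ since otherwise $h={\rm id}$ has order $1$; taking $b=\beta$ completes the argument.
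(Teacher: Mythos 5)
Your proof is correct and follows essentially the same route as the paper: extend $h$ across the punctures via Riemann's removable singularity theorem to regard it as an element of ${\rm Aut}(\mathbb{C})$, write it as an affine map, and use ${\rm ord}(h)=\infty$ together with the closedness and discreteness of $A$ to force the multiplier to be $1$ and the translation part nonzero. The paper compresses both steps into single sentences (``we can regard $h$ as an element of ${\rm Aut}(\mathbb{C})$'' and ``it is easily seen''), so your write-up merely supplies the end-preservation and orbit arguments that the paper leaves implicit.
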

\begin{proof}
From Riemann's removable singularity theorem, we can regard $h$ as an element of 
${\rm Aut}(\mathbb{C})$. Therefore $h$ can be written as $h(z)=az+b$ with certain
$a\in \mathbb{C}\setminus\{0\}$ and $b\in \mathbb{C}$. It is easily seen $a=1$ and $b\neq 0$
from the conditions ${\rm ord}(h)=\infty$ and $A\in \mathscr{P}$.
\end{proof}
\begin{lemma} \label{lemma2}
Let $A\in \mathscr{P}$. Assume that $z+1 \in {\rm Aut}(\mathbb{C}\setminus A)$ and
$(\mathbb{C}\setminus A)/\langle z+1\rangle$ has infinitely many punctures.
If there exists a quasiconformal homeomorphism $f:\mathbb{C}\setminus \mathbb{Z}\rightarrow\mathbb{C} \setminus A$,
then $f$ satisfies the following condition.
\[
\sup_{a\in A}\ \left| f^{-1}\left( a\right)-f^{-1}\left( a-1 \right)\right|=+\infty.
\]
Here, we use the same letter $f$ for the quasiconformal extension of $f$ which maps $\mathbb{C}$ onto $\mathbb{C}$ (see the proof of Theorem \ref{thmA}).
\end{lemma}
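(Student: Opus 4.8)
The plan is to argue by contradiction and to reduce the statement to a combinatorial fact about permutations of $\mathbb{Z}$ of bounded displacement. Suppose, contrary to the claim, that $\sup_{a\in A}|f^{-1}(a)-f^{-1}(a-1)|=M<+\infty$ for some integer $M$. As in the proof of Theorem~\ref{thmA}, $f$ extends to a homeomorphism $f:\mathbb{C}\to\mathbb{C}$ whose restriction $f|_{\mathbb{Z}}:\mathbb{Z}\to A$ is a bijection, so $g:=f^{-1}$ restricts to a bijection $g:A\to\mathbb{Z}$. Since $z+1\in{\rm Aut}(\mathbb{C}\setminus A)$ we have $A+1=A$, so $a\mapsto a+1$ is a bijection of $A$; transporting it by $g$ defines a bijection $\sigma:\mathbb{Z}\to\mathbb{Z}$ via $\sigma(g(a))=g(a+1)$. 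With this notation the contradiction hypothesis reads $|\sigma(m)-m|\le M$ for every $m\in\mathbb{Z}$, i.e. $\sigma$ has displacement at most $M$.

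Next I would record the orbit structure of $\sigma$. The orbits of $a\mapsto a+1$ on $A$ are exactly the sets $\{a_0+n:n\in\mathbb{Z}\}$, each bi-infinite because the points $a_0+n$ are distinct, and two points of $A$ lie in one orbit iff they differ by an integer. Hence these orbits correspond to the punctures of $(\mathbb{C}\setminus A)/\langle z+1\rangle$ coming from $A$, and the hypothesis of infinitely many punctures forces infinitely many orbits. Applying $g$, the permutation $\sigma$ has infinitely many orbits, each an injective bi-infinite sequence in $\mathbb{Z}$ with consecutive steps bounded by $M$.

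The core of the argument is to show that a bijection of $\mathbb{Z}$ of bounded displacement cannot have infinitely many infinite orbits. Fix a half-integer level $k+\tfrac12$ and call an orbit \emph{straddling} if it meets both $\{m\le k\}$ and $\{m>k\}$. Since $\sigma$ moves every point by at most $M$, a crossing arrow $(m,\sigma(m))$ forces $m$ into one of two windows of $M$ consecutive integers, so there are at most $2M$ crossing arrows; as each straddling orbit must contain a crossing arrow and distinct orbits carry disjoint arrows, at most $2M$ orbits straddle any fixed level. On the other hand, every infinite orbit is unbounded above or below in value, so among the infinitely many infinite orbits either infinitely many have supremum $+\infty$ or infinitely many have infimum $-\infty$; by the symmetry $m\mapsto-m$ I may assume the former. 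Choosing $2M+1$ orbits with supremum $+\infty$, a point $p_i$ in each, and the level $k=\max_i p_i$, each chosen orbit meets $\{m\le k\}$ at $p_i$ and meets $\{m>k\}$ since its supremum is $+\infty$, yielding $2M+1$ straddling orbits and contradicting the bound $2M$.

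The main obstacle, and the only place requiring care, is this last combinatorial step: one must treat orbits bounded above or below (hence the $m\mapsto-m$ reduction), check that a straddling orbit genuinely contains a level-crossing arrow by following the bi-infinite $\sigma$-sequence, and verify the displacement windows. The analytic input—that $f$ extends across the punctures to a homeomorphism of $\mathbb{C}$ with $f(\mathbb{Z})=A$—is exactly the removable-singularity fact already used for Theorem~\ref{thmA}; the passage to $\sigma$ and the counting are elementary. Reversing the contradiction then gives $\sup_{a\in A}|f^{-1}(a)-f^{-1}(a-1)|=+\infty$, as desired.
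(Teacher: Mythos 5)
Your proof is correct, and its skeleton matches the paper's: argue by contradiction from a uniform bound $M$, use the extension of $f$ with $f(\mathbb{Z})=A$ to transport the translation $a\mapsto a+1$ on $A$ back to $\mathbb{Z}$, note that infinitely many punctures of the quotient force infinitely many bi-infinite orbits, and finish with a pigeonhole count exploiting bounded displacement. Where you genuinely differ is the counting mechanism. The paper works with the preimage curves $C'_n=f^{-1}(C_n)$ of the horizontal lines through the points of $A$ (the integers lying on $C'_n$ are exactly one of your $\sigma$-orbits), chooses a block of consecutive integers $\{k,\dots,l\}$ met by exactly $M+1$ distinct curves (possible because the number of curves met by a growing block increases by at most one per step and tends to infinity), and observes that each of these curves must pass through at least two of the $2M$ buffer points $\{k-M,\dots,k-1\}\cup\{l+1,\dots,l+M\}$, forcing $2(M+1)\le 2M$, a contradiction. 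You instead extract a free-standing combinatorial lemma (a bijection of $\mathbb{Z}$ with displacement at most $M$ has at most $2M$ orbits straddling any fixed level) and violate it with $2M+1$ orbits unbounded above and the level $k=\max_i p_i$. Your route buys a cleanly reusable statement about bounded-displacement permutations, and it needs only that each orbit is an infinite subset of $\mathbb{Z}$, whereas the paper's ``two buffer points per curve'' step implicitly uses the escape of each curve in both parameter directions; the paper's route, in turn, avoids your case split between orbits unbounded above and below and the $m\mapsto-m$ reduction, since both escapes of each curve are counted at once. The two counts are elementary and of comparable depth, so the difference is in the combinatorial packaging rather than in substance.
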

Remark that since $z+1$ belongs to ${\rm Aut}(\mathbb{C}\setminus A)$, \  $a-1\in A$ for all $a\in A$. Further, since $f(\mathbb{Z})=A$, \ $f^{-1}(a)\in \mathbb{Z}$ for all $a \in A$.
\begin{proof}
To obtain a contradiction, assume that there exists a constant $M\in \mathbb{N}$ such that $|f^{-1}(a)-f^{-1}(a-1)|\leq M$ for all $a\in A$.

Let $S=\{x+iy \mid x\in [0,1),\ y\in \mathbb{R}\}$. The assumption that
$(\mathbb{C}\setminus A)/\langle z+1\rangle$ has infinitely many punctures means 
that $S\cap A$ consists of countably infinitely many points. Recall that $A$ is closed and discrete in $\mathbb{C}$,
so that, $\{ {\rm Im}(z) \mid z\in S\cap A\}$ also consists of countably infinitely many points.
Numbering them suitably, we let
\[
\left\{ \left.{\rm Im}(z)\right| z\in S\cap A\right\}=\{a_n\}_{n=1}^{\infty},
\]
and define curves $C_n,\ C'_n$ by
\[
C_n(t)=t+ia_n \hspace{4mm}(t\in \mathbb{R}), \hspace{6mm} C'_n=f^{-1}(C_n).
\]
By definition, each curve $C'_n$ passes integers. Conversely, for each $m\in \mathbb{Z}$, there uniquely exists a curve
in $\{ C'_n\}_{n=1}^{\infty}$ which passes $m$.
We denote such a curve by $C'_{n_m}$. 
\begin{figure}[h]
\begin{center}
\includegraphics[width=12.5cm]{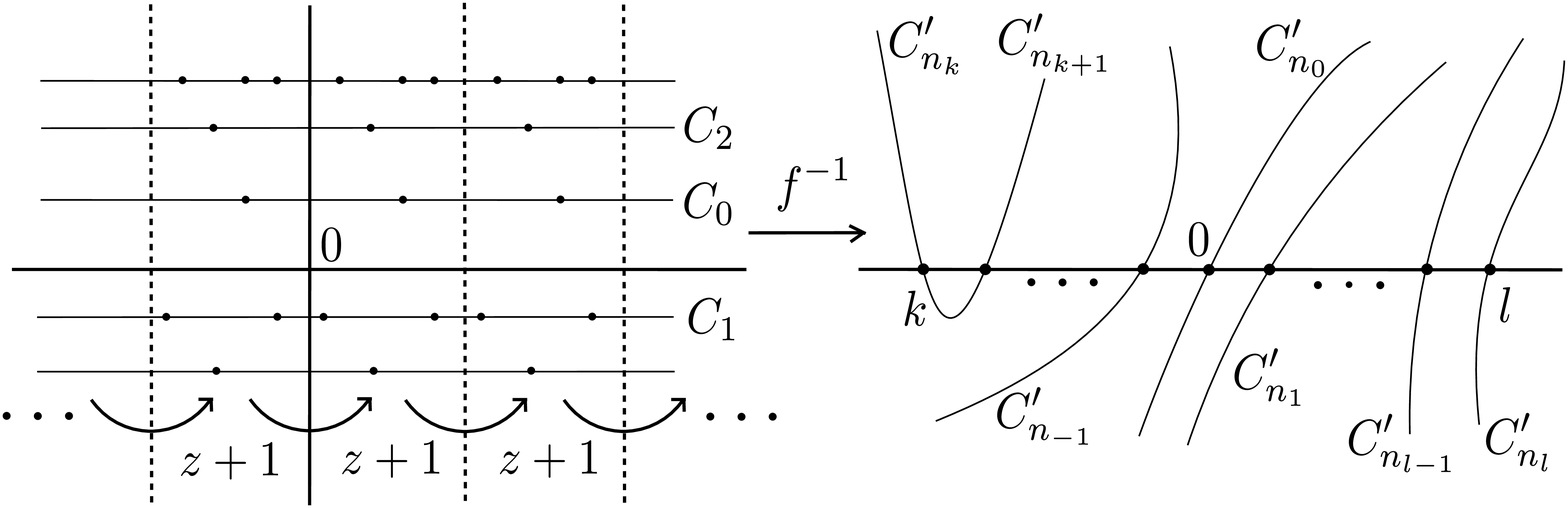} 
\end{center}
\end{figure}

From the properties of the curve family $\{C'_n\}_{n=1}^{\infty}$, there exist $k,l\in\mathbb{Z}\ (k<l)$ such that $\{ C'_{n_{k}},\ C'_{n_{k+1}},\cdots,\ C'_{n_{l-1}},\ C'_{n_l}\}$
consists of $M+1$ curves.
Since $|f^{-1}(a)-f^{-1}(a-1)|\leq M$ for all $a\in A$,
each curve $C'_{n_i}\ (i=k,k+1,\cdots,l)$ passes at least two points of
$\{k-M,k-M+1,\cdots,k-1\}\cup \{l+1,l+2,\cdots,l+M\}$. There must exist 
at least $2(M+1)$ points to pass, however, $\{k-M,k-M+1,\cdots,k-1\}\cup \{l+1,l+2,\cdots,l+M\}$ consists of only $2M$ points. This is a contradiction.
\end{proof}
\newpage
\begin{lemma} \label{lemma3}
Let $A \in \mathscr{P}$ be tame and $f:\mathbb{C}\setminus \mathbb{Z}\rightarrow\mathbb{C}
\setminus A$ be a $K$-quasiconformal homeomorphism. For arbitrary $n,m \in \mathbb{Z}
\ (m>n)$
and $d\in \mathbb{N}$, we set $N_d=[n-d,n]$ and $M_d=[m,m+d]$. Then
\[
\frac{\min\left\{ {\rm diam} f(N_d),{\rm diam}f(M_d) \right\}}{\left|f(m)-f(n)\right|} \leq  \exp \left( \frac{\pi^2 K}{\displaystyle \log \left( 1+2\frac{m-n}{d} \right)} \right)-1.
\]
Here, we use the same letter $f$ for the quasiconformal extension of $f$ which maps $\mathbb{C}$ onto $\mathbb{C}$ (see the proof of Theorem \ref{thmA}).
\end{lemma}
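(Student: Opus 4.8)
The plan is to realize the quantity on the left as being controlled by the extremal distance $\edc{f(N_d)}{f(M_d)}$, and then to sandwich this extremal distance between a quasiconformal upper bound and Vuorinen's lower bound. Since the extended map $f:\hat{\mathbb{C}}\to\hat{\mathbb{C}}$ is $K$-quasiconformal (as in the proof of Theorem \ref{thmA}) and carries the family of curves joining $N_d$ and $M_d$ onto the family joining $f(N_d)$ and $f(M_d)$, the modulus inequality gives
\[
\edc{f(N_d)}{f(M_d)} \leq K\,\edc{N_d}{M_d}.
\]
On the other hand $f(N_d)$ and $f(M_d)$ are disjoint continua, and since $f(n)\in f(N_d)$, $f(m)\in f(M_d)$ we have ${\rm dist}(f(N_d),f(M_d))\leq |f(m)-f(n)|$; hence Vuorinen's lower bound yields
\[
\edc{f(N_d)}{f(M_d)} \geq \frac{2}{\pi}\log\!\left(1+\frac{\min\{{\rm diam}\,f(N_d),{\rm diam}\,f(M_d)\}}{|f(m)-f(n)|}\right).
\]
Thus everything reduces to a purely geometric upper bound for $\edc{N_d}{M_d}$, the extremal distance between the two real intervals.

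The heart of the argument --- and the step I expect to require the most care --- is this geometric estimate
\[
\edc{N_d}{M_d} \leq \frac{2\pi}{\log\!\left(1+2\frac{m-n}{d}\right)}.
\]
I would prove it by viewing $R=\hat{\mathbb{C}}\setminus(N_d\cup M_d)$ as a ring domain, so that $\edc{N_d}{M_d}$ equals the reciprocal of the conformal modulus $\mathrm{mod}(R)$, and then bounding $\mathrm{mod}(R)$ from below by inserting an explicit round separating annulus. Let $c=n-\tfrac{d}{2}$ be the midpoint of $N_d$; then $N_d\subset\overline{D}(c,\tfrac{d}{2})$ while $M_d$ lies in $\{\,|z-c|\geq (m-n)+\tfrac{d}{2}\,\}$, so the round annulus $A=\{\,\tfrac{d}{2}<|z-c|<(m-n)+\tfrac{d}{2}\,\}$ lies in $R$ and separates $N_d$ from $M_d$. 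By monotonicity of the modulus,
\[
\mathrm{mod}(R)\ \geq\ \mathrm{mod}(A)\ =\ \frac{1}{2\pi}\log\frac{(m-n)+d/2}{d/2}\ =\ \frac{1}{2\pi}\log\!\left(1+2\frac{m-n}{d}\right),
\]
and taking reciprocals gives the claimed bound. The delicate points are checking that $A$ genuinely separates the two continua inside $R$, and noting that the radii $\tfrac{d}{2}$ and $(m-n)+\tfrac{d}{2}$ are forced by the requirements that the inner disk contain $N_d$ and the outer region avoid $M_d$; the factor $2$ in $1+2(m-n)/d$ arises precisely from the half-length $\tfrac{d}{2}$ of the intervals.

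Finally I would combine the three displays. Feeding the annulus bound into the quasiconformal inequality and comparing with Vuorinen's bound gives
\[
\frac{2}{\pi}\log\!\left(1+\frac{\min\{{\rm diam}\,f(N_d),{\rm diam}\,f(M_d)\}}{|f(m)-f(n)|}\right)\ \leq\ \frac{2\pi K}{\log\!\left(1+2\frac{m-n}{d}\right)},
\]
where the prefactors $\tfrac{2}{\pi}$ and $2\pi$ combine to produce $\pi^2$. Dividing by $\tfrac{2}{\pi}$ and exponentiating then yields exactly
\[
\frac{\min\{{\rm diam}\,f(N_d),{\rm diam}\,f(M_d)\}}{|f(m)-f(n)|}\ \leq\ \exp\!\left(\frac{\pi^2 K}{\log\!\left(1+2\frac{m-n}{d}\right)}\right)-1.
\]
The only routine verifications left are that the extended $f$ remains $K$-quasiconformal on all of $\hat{\mathbb{C}}$ and that it maps the joining curve family of $(N_d,M_d)$ bijectively onto that of $(f(N_d),f(M_d))$, both of which are standard.
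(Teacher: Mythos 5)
Your proof is correct and follows essentially the same route as the paper's: Vuorinen's lower bound with ${\rm dist}(f(N_d),f(M_d))\leq|f(m)-f(n)|$, the $K$-quasiconformal modulus inequality, and the bound $\edc{N_d}{M_d}\leq 2\pi/\log\left(1+2\tfrac{m-n}{d}\right)$ coming from the very same round annulus $\left\{d/2<|z-(n-d/2)|<d/2+m-n\right\}$. The only cosmetic difference is that you phrase the annulus step via the reciprocal ring modulus and Gr\"otzsch monotonicity, whereas the paper cites the separation property of the curve-family modulus directly; the constants and conclusion are identical.
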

\begin{proof}
Because ${\rm dist}(f(N_d),f(M_d)) \leq |f(m)-f(n)|$, it follows from Vuorinen's Theorem that
\[
\frac{2}{\pi}\log \left(1+\frac{\min\left\{ {\rm diam}f(N_d),{\rm diam}f(M_d)\right\}}{|f(m)-f(n)|}\right) \leq \edc{f(N_d)}{f(M_d)}.
\]
The quasiconformality of $f$ implies that
\[
\edc{f(N_d)}{f(M_d)}\leq K\edc{N_d}{M_d}.
\]
Further, since $N_d,M_d$ are separated by the ring domain $\{ d/2 <|z-(n-d/2)|<d/2+m-n\}$,
\[
\edc{N_d}{M_d}\leq \frac{2\pi}{\displaystyle \log \frac{d/2+m-n}{d/2}}=\frac{2\pi}{\displaystyle \log \left(1+2\frac{m-n}{d}\right)}.
\]
Consequently, we obtain
\[
\frac{2}{\pi}\log \left(1+\frac{\min\left\{ {\rm diam}f(N_d),{\rm diam}f(M_d)\right\}}{|f(m)-f(n)|}\right)  \leq \frac{2\pi K}{\displaystyle \log \left(1+2\frac{m-n}{d}\right)}.
\]
Simplifying this inequality, we obtain the required inequality. 
\end{proof}

\subsection{Proof of Theorem \ref{thmB}}
Theorem \ref{thmB} is proved as a corollary of the next theorem.
\begin{thm} \label{thm}
Let $A\in \mathscr{P}$. Assume there exists an automorphism of infinite order
$h\in {\rm Aut}(\mathbb{C}\setminus A)$ such that
the quotient space $\left( \mathbb{C}\setminus A\right)/\langle h\rangle$ has
infinitely many punctures. If there exists a quasiconformal homeomorphism 
$f:\mathbb{C}\setminus\mathbb{Z} \rightarrow \mathbb{C}\setminus A$, then
for arbitrary $d\in \mathbb{N}$ and $\varepsilon>0$, there exists $n\in \mathbb{Z}$
such that
\[
{\rm diam}f([n-d,n]) \leq \varepsilon.
\]
Here, we use the same letter $f$ for the quasiconformal extension of $f$ which maps $\mathbb{C}$ onto $\mathbb{C}$ (see the proof of Theorem \ref{thmA}).
\end{thm}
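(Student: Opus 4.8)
The plan is to normalize the automorphism and then make Lemmas~\ref{lemma2} and~\ref{lemma3} interact: the former supplies arbitrarily far-apart integers whose $f$-images stay at bounded distance, and the latter then forces one of the two associated end-intervals to have small image diameter. First I would reduce to the case $h(z)=z+1$. By Lemma~\ref{lemma1} the infinite-order automorphism $h$ has the form $h(z)=z+b$ with $b\neq 0$. Conjugating by the conformal scaling $z\mapsto z/b$ replaces $A$ by $A'=A/b$, replaces $f$ by the $K$-quasiconformal map $g(z)=f(z)/b:\mathbb{C}\setminus\mathbb{Z}\to\mathbb{C}\setminus A'$, and replaces $h$ by $z+1\in{\rm Aut}(\mathbb{C}\setminus A')$; the quotient $(\mathbb{C}\setminus A')/\langle z+1\rangle$ still has infinitely many punctures, and $A'$ is still tame. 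Since the scaling is conformal it multiplies every diameter by the fixed factor $|b|$ and leaves the domain $\mathbb{C}\setminus\mathbb{Z}$ (hence each integer interval $[n-d,n]$) unchanged, so it suffices to prove the conclusion for $g$ and $A'$ with $\varepsilon$ replaced by $\varepsilon/|b|$. I may therefore assume $z+1\in{\rm Aut}(\mathbb{C}\setminus A)$ throughout.

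With this normalization, Lemma~\ref{lemma2} gives $\sup_{a\in A}|f^{-1}(a)-f^{-1}(a-1)|=+\infty$. Hence for any $L>0$ there is an $a\in A$ for which $p:=f^{-1}(a-1)$ and $q:=f^{-1}(a)$ are integers (as $f$ maps $\mathbb{Z}$ bijectively onto $A$) with $|p-q|>L$, while $|f(p)-f(q)|=|(a-1)-a|=1$. Setting $n=\min(p,q)$ and $m=\max(p,q)$, so that $m>n$ and $|f(m)-f(n)|=1$, I would apply Lemma~\ref{lemma3} to this pair to obtain
\[
\min\bigl\{{\rm diam}\,f([n-d,n]),\ {\rm diam}\,f([m,m+d])\bigr\}\leq \exp\left(\frac{\pi^2 K}{\log\left(1+2\frac{m-n}{d}\right)}\right)-1 .
\]

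The reason for routing through Lemma~\ref{lemma2} is exactly to hold $|f(m)-f(n)|$ bounded (indeed equal to $1$) while letting $m-n\to\infty$; this is what drives the right-hand side to $0$ and is the crux of the proof, since otherwise $|f(m)-f(n)|$ might grow and the Lemma~\ref{lemma3} estimate would be vacuous. Now fix $d$ and $\varepsilon$, and choose $L$ so large that $\exp\!\left(\pi^2 K/\log(1+2L/d)\right)-1<\varepsilon$; by monotonicity the displayed right-hand side is then less than $\varepsilon$ for the resulting pair, so one of the two diameters is less than $\varepsilon$. If it is the first, the integer $n$ is already of the required form. If it is the second, put $n'=m+d\in\mathbb{Z}$ and note that $[m,m+d]=[n'-d,n']$, whence ${\rm diam}\,f([n'-d,n'])<\varepsilon$. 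Either way an integer of the desired form exists, proving the theorem. The genuinely delicate step is this interplay between the two lemmas; the initial reduction and the final case-split are routine bookkeeping.
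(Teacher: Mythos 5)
Your proof is correct and follows essentially the same route as the paper: reduce to $h(z)=z+1$ via Lemma~\ref{lemma1} and an affine conjugation, use Lemma~\ref{lemma2} to produce integers $m>n$ with $|f(m)-f(n)|=1$ and $m-n$ arbitrarily large, and then apply Lemma~\ref{lemma3} with the case split on which of $f([n-d,n])$, $f([m,m+d])$ has small diameter. Your choice of a threshold $L$ plays exactly the role of the paper's $1/\varepsilon'$, so there is no substantive difference.
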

\begin{proof}
By Lemma \ref{lemma1}, $h$ can be written as $h(z)=z+b$ with a certain 
$b\in\mathbb{C}\setminus \{0\}$. Then there exists an affine transformation $g\in{\rm Aut}(\mathbb{C})$ 
satisfying $g\circ h\circ g^{-1}(z)=z+1 \in {\rm Aut}(\mathbb{C}\setminus g(A))$.
Therefore we may assume $h(z)=z+1$.

Let $d\in \mathbb{N},\ \varepsilon>0$ and $f:\mathbb{C}\setminus \mathbb{Z}\rightarrow \mathbb{C}\setminus A$ be a $K$-quasiconformal homeomorphism.
Choose $\varepsilon'>0$ with 
\[
 \exp \left( \frac{\pi^2 K}{\displaystyle \log \left( 1+2/d\varepsilon' \right)} \right)-1<\varepsilon.
 \]
By Lemma \ref{lemma2}, there exists $a\in A$ such that
\[
\left| f^{-1}(a)-f^{-1}(a-1) \right| > \frac{1}{\varepsilon'}.
\]
Let $n=\min\{f^{-1}(a),f^{-1}(a-1)\},\ m=\max\{f^{-1}(a),f^{-1}(a-1)\}$. 
Since $|f(m)-f(n)|=1$, it follows from Lemma \ref{lemma3} that
\begin{eqnarray*}
\min\left\{ {\rm diam} f(N_d),{\rm diam}f(M_d) \right\} &\leq & \exp \left( \frac{\pi^2 K}{\displaystyle \log \left( 1+2\frac{m-n}{d} \right)} \right)-1\\
      &\leq & \exp \left( \frac{\pi^2 K}{\displaystyle \log \left( 1+2/d\varepsilon' \right)} \right)-1<\varepsilon,
\end{eqnarray*}
where $N_d=[n-d,n]$ and $M_d=[m,m+d]$.

Therefore if $\min\left\{ {\rm diam} f(N_d),{\rm diam}f(M_d) \right\}={\rm diam} f(N_d)$, then $n$ is the desired integer. In the other case, $m+d$ is the desired integer.
\end{proof}
\setcounter{theo}{1}
\begin{thma} \label{thmB}
Let $A\in \mathscr{P}$. Assume there exists an automorphism of infinite order
$h\in {\rm Aut}(\mathbb{C}\setminus A)$ such that
the quotient space $\left( \mathbb{C}\setminus A\right)/\langle h\rangle$ has
infinitely many punctures. If 
$A$ is tame, then for 
any $\varepsilon>0$ and $d \in \mathbb{N}$, there exists $a\in A$ such that
\[
\# D(a,\varepsilon)\cap A \geq d,
\]
where for a finite set $X$, $\# X$ denotes the number of elements of $X$.
\end{thma}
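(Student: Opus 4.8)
The plan is to derive Theorem~\ref{thmB} directly from Theorem~\ref{thm}, which has already been established. The essential observation is that a quasiconformal homeomorphism $f:\mathbb{C}\setminus\mathbb{Z}\to\mathbb{C}\setminus A$ carries the integer lattice $\mathbb{Z}$ bijectively onto $A$, so a subarc of $\mathbb{R}$ consisting of consecutive integers and having small image automatically produces many points of $A$ that are mutually close. All of the genuine analytic content—the extremal-distance estimates of Lemma~\ref{lemma3} together with the packing argument of Lemma~\ref{lemma2}—is already absorbed into Theorem~\ref{thm}, so what remains is only a translation of ``an interval with small image'' into ``many nearby points of $A$.''

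First I would invoke the hypothesis that $A$ is tame to obtain a $K$-quasiconformal homeomorphism $f:\mathbb{C}\setminus\mathbb{Z}\to\mathbb{C}\setminus A$, extended as usual to a quasiconformal self-map of $\mathbb{C}$ with $f(\mathbb{Z})=A$ (cf. the proof of Theorem~\ref{thmA}). Combined with the standing assumption on the infinite-order automorphism $h\in{\rm Aut}(\mathbb{C}\setminus A)$ whose quotient has infinitely many punctures, this places us exactly in the situation covered by Theorem~\ref{thm}.

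Next, given $\varepsilon>0$ and $d\in\mathbb{N}$, I would apply Theorem~\ref{thm} with the same $d$ but with $\varepsilon/2$ in place of $\varepsilon$, obtaining an integer $n\in\mathbb{Z}$ such that ${\rm diam}\,f([n-d,n])\le \varepsilon/2$. The closed interval $[n-d,n]$ contains the $d+1$ consecutive integers $n-d,n-d+1,\dots,n$, whose images $f(n-d),f(n-d+1),\dots,f(n)$ are $d+1$ distinct points of $A$, all lying in the set $f([n-d,n])$ of diameter at most $\varepsilon/2$. Setting $a=f(n-d)\in A$, each such point $w$ satisfies $|w-a|\le{\rm diam}\,f([n-d,n])\le \varepsilon/2<\varepsilon$ and hence lies in $D(a,\varepsilon)$; therefore $\#\big(D(a,\varepsilon)\cap A\big)\ge d+1\ge d$, as desired.

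I do not expect any real obstacle in this argument, since it is a formal corollary of Theorem~\ref{thm}. The only point to watch is the distinction between the open disk $D(a,\varepsilon)$ and a closed set of diameter $\varepsilon$, which is precisely why I would apply Theorem~\ref{thm} at the reduced scale $\varepsilon/2$ rather than at $\varepsilon$; this converts the nonstrict diameter bound into the strict inequality $|w-a|<\varepsilon$ needed to place all the points inside the open disk.
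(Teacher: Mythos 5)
Your proof is correct and follows the same route as the paper: both derive the statement as an immediate corollary of Theorem~\ref{thm}, using the fact that the extension of $f$ maps $\mathbb{Z}$ bijectively onto $A$, so the $d+1$ integer points in $[n-d,n]$ yield $d+1$ points of $A$ clustered in a set of small diameter. Your extra care in applying Theorem~\ref{thm} at scale $\varepsilon/2$ (to pass from a nonstrict diameter bound to membership in the open disk $D(a,\varepsilon)$) is a small refinement of a point the paper glosses over, but the argument is otherwise identical.
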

\begin{proof}
In the proof of Theorem \ref{thm},
 remark that the restriction $\left.f\right|_{\mathbb{Z}}:\mathbb{Z}\rightarrow A$
is bijective. We immediately obtain the claim by taking $a=f(n)$ for $n$ obtained in Theorem \ref{thm}.
\end{proof}

\subsection{Example} It directly follows from Theorem \ref{thmB} that the closed discrete subset $A$ of Example \ref{exa}
is not tame.

\subsection{Example} \label{exstrange}
Let
\[
\displaystyle A=\mathbb{Z}+i\bigcup_{n=1}^{\infty}\left\{ \left.2^n+\left(\frac{1}{2}\right)^{n+1}e^{\frac{2\pi ki}{n}}\right|
k=0,1,\cdots,n-1 \right\}.
\]
Then $A$ satisfies the conditions that $z+1\in {\rm Aut}(\mathbb{C}\setminus A)$ and
$(\mathbb{C}\setminus A)/\langle z+1\rangle$ has infinitely many puctures, however,
it cannot be decided whether $A$ is tame or not from Theorem \ref{thmA} and
Theorem \ref{thmB}.
    


\begin{bibdiv}
\begin{biblist}

\bib{ahl3}{article}{
      author={Ahlfors, L.~V.},
       title={Quasiconformal reflections},
        date={1963},
        ISSN={0001-5962},
     journal={Acta Math.},
      volume={109},
       pages={291\ndash 301},
      review={\MR{0154978 (27 \#4921)}},
}

\bib{geh2}{incollection}{
      author={Gehring, F.~W.},
       title={Characterizations of quasidisks},
        date={1999},
   booktitle={Quasiconformal geometry and dynamics ({L}ublin, 1996)},
      series={Banach Center Publ.},
      volume={48},
   publisher={Polish Acad. Sci., Warsaw},
       pages={11\ndash 41},
      review={\MR{1709972 (2000g:30014)}},
}

\bib{geh6}{article}{
      author={Gehring, F.~W.},
      author={Osgood, B.~G.},
       title={Uniform domains and the quasihyperbolic metric},
        date={1979},
        ISSN={0021-7670},
     journal={J. Analyse Math.},
      volume={36},
       pages={50\ndash 74 (1980)},
         url={http://dx.doi.org.ejgw.nul.nagoya-u.ac.jp/10.1007/BF02798768},
      review={\MR{581801 (81k:30023)}},
}

\bib{gol1}{article}{
      author={Gol{$'$}d{\v{s}}te{\u\i}n, V.~M.},
      author={Vodop{$'$}janov, S.~K.},
       title={Prolongement des fonctions de classe {$L^{1}_{p}$} et
  applications quasi conformes},
        date={1980},
        ISSN={0151-0509},
     journal={C. R. Acad. Sci. Paris S\'er. A-B},
      volume={290},
      number={10},
       pages={A453\ndash A456},
      review={\MR{571380 (81k:30024)}},
}

\bib{mcm1}{article}{
      author={McMullen, C.},
       title={Amenability, {P}oincar\'e series and quasiconformal maps},
        date={1989},
        ISSN={0020-9910},
     journal={Invent. Math.},
      volume={97},
      number={1},
       pages={95\ndash 127},
         url={http://dx.doi.org.ejgw.nul.nagoya-u.ac.jp/10.1007/BF01850656},
      review={\MR{999314 (90e:30048)}},
}

\bib{osg1}{article}{
      author={Osgood, B.~G.},
       title={Univalence criteria in multiply-connected domains},
        date={1980},
        ISSN={0002-9947},
     journal={Trans. Amer. Math. Soc.},
      volume={260},
      number={2},
       pages={459\ndash 473},
         url={http://dx.doi.org.ejgw.nul.nagoya-u.ac.jp/10.2307/1998015},
      review={\MR{574792 (81h:30021)}},
}

\bib{vai2}{book}{
      author={V{\"a}is{\"a}l{\"a}, J.},
       title={Lectures on {$n$}-dimensional quasiconformal mappings},
      series={Lecture Notes in Mathematics, Vol. 229},
   publisher={Springer-Verlag, Berlin-New York},
        date={1971},
      review={\MR{0454009 (56 \#12260)}},
}

\bib{vai1}{article}{
      author={V{\"a}is{\"a}l{\"a}, J.},
       title={Uniform domains},
        date={1988},
        ISSN={0040-8735},
     journal={Tohoku Math. J. (2)},
      volume={40},
      number={1},
       pages={101\ndash 118},
  url={http://dx.doi.org.ejgw.nul.nagoya-u.ac.jp/10.2748/tmj/1178228081},
      review={\MR{927080 (89d:30027)}},
}

\bib{vuo3}{article}{
      author={Vuorinen, M.},
       title={On {T}eichm\"uller's modulus problem in {${\bf R}^n$}},
        date={1988},
        ISSN={0025-5521},
     journal={Math. Scand.},
      volume={63},
      number={2},
       pages={315\ndash 333},
      review={\MR{1018820 (90k:30038)}},
}

\end{biblist}
\end{bibdiv}



\end{document}